\newtheorem{theorem}{Theorem}
\theoremstyle{plain}
\newtheorem{claim}{Claim}
\newtheorem{corollary}{Corollary}
\newtheorem{definition}{Definition}
\newtheorem{lemma}{Lemma}
\newtheorem{proposition}{Proposition}
\newtheorem{remark}{Remark}
\numberwithin{equation}{section}
\begin{document}
\title{Groups equal to a product of three conjugate subgroups}
\author{John Cannon}
\address[John Cannon]{Computational Algebra Group \\
School of Mathematics and Statistics\\
The University of Sydney\\
Sydney, NSW 2006\\
Australia}
\email{john@maths.usyd.edu.au}
\author{Martino Garonzi}
\address[Martino Garonzi]{Department of Mathematics\\
University of Padova\\
Via Trieste 63\\
35121 Padova\\
Italy}
\email{mgaronzi@gmail.com}
\author{Dan Levy}
\address[Dan Levy]{The School of Computer Sciences \\
The Academic College of Tel-Aviv-Yaffo \\
2 Rabenu Yeruham St.\\
Tel-Aviv 61083\\
Israel}
\email{danlevy@mta.ac.il}
\author{Attila Mar\'{o}ti}
\address[Attila Mar\'{o}ti]{Fachbereich Mathematik, Technische Universit\"{a}%
t Kaiserslautern\\
Postfach 3049, 67653 Kaiserslautern\\
Germany \\
and Alfr\'{e}d R\'{e}nyi Institute of Mathematics\\
Re\'{a}ltanoda utca 13-15\\
H-1053, Budapest\\
Hungary}
\email{maroti@mathematik.uni-kl.de and maroti.attila@renyi.mta.hu}
\thanks{A.M. acknowledges the support of an Alexander von Humboldt
Fellowship for Experienced Researchers and the support of OTKA K84233.}
\author{Iulian I. Simion}
\address[Iulian I. Simion]{Department of Mathematics\\
University of Padova\\
Via Trieste 63\\
35121 Padova\\
Italy}
\email{iulian.simion@math.unipd.it}
\thanks{I.S. acknowledges the support of the Swiss National Science
Foundation (project number P2ELP2\_148913) as well as the support of the
University of Padova by grants CPDR131579/13 and CPDA125818/12 . }
\date{\today }
\subjclass[2000]{ 20D40, 20E42, 20D05, 20B40}
\keywords{double cosets, triple factorization, $BN$-pairs, dioid}

\begin{abstract}
Let $G$ be a finite non-solvable group. We prove that there exists a proper
subgroup $A$ of $G$ such that $G$ is the product of three conjugates of $A$,
thus replacing an earlier upper bound of $36$ with the smallest possible
value. The proof relies on an equivalent formulation in terms of double
cosets, and uses the following theorem which is of independent interest and
wider scope: Any group $G$ with a $BN$-pair and a finite Weyl group $W$
satisfies $G=\left( Bn_{0}B\right) ^{2}=BB^{n_{0}}B$ where $n_{0}$ is any
preimage of the longest element of $W$. The proof of the last theorem is
formulated in the dioid consisting of all unions of double cosets of $B$ in $%
G$. Other results on minimal length product covers of a group by conjugates
of a proper subgroup are given.
\end{abstract}

\maketitle

\section{ Introduction\label{Sect_Intro}}

This paper continues and expands the study, initiated in \cite{GaronziLevy},
of minimal length factorizations of groups in terms of conjugates of a
proper subgroup. Here we do not limit the discussion to finite groups and
therefore we define $\gamma _{\text{cp}}\left( G\right) $, for any group $G$%
, to be the minimal integer $k$ such that $G$ is a product of $k$ conjugates
of a proper subgroup of $G$, and $\gamma _{\text{cp}}\left( G\right) =\infty 
$ if no such $k$ exists. Note that any $G$ such that every maximal subgroup
of $G$ is normal, has $\gamma _{\text{cp}}\left( G\right) =\infty $, so in
particular, if $G$ is finite, $\gamma _{\text{cp}}\left( G\right) =\infty $
if and only if $G$ is nilpotent. In \cite{GaronziLevy} it was shown that $%
\gamma _{\text{cp}}\left( G\right) $ cannot be bounded by a universal
constant in the class of finite solvable non-nilpotent groups while $\gamma
_{\text{cp}}\left( G\right) \leq 36$ for any finite non-solvable $G$. In the
present paper we prove that $\gamma _{\text{cp}}\left( G\right) =3$ for any
finite non-solvable $G$ (Theorem \ref{Th_gamma=3} below). Note that for any
non-trivial $G$ we have $\gamma _{\text{cp}}\left( G\right) \geq 3$.

Once the problem is reduced to the almost simple case, the proof of $\gamma
_{\text{cp}}\left( G\right) \leq 36$ uses two related ingredients which we
would like to recall. The first one is the interpretation of the condition $%
G=$ $A_{1}\cdots A_{k}$, where $A_{i}$ is a conjugate of $A<G$ for any $%
1\leq i\leq k$, in terms of the right multiplication action of $G$ on $%
\Omega _{A}:=\left\{ Ax|x\in G\right\} $ (which induces a right
multiplication action of any subgroup of $G$ on $\Omega _{A}$). The second
one is a relation between products of conjugates of $A$ and products of
double cosets of $A$. Our first result can be viewed as an extension of
these observations. We denote $x^{-1}Ax$ by $A^{x}$ for any $A\leq G$ and $%
x\in G$, and for any two sets $A$ and $B$, "$A$ intersects $B$" is
equivalent to $A\cap B\neq \phi $.

\begin{theorem}
\label{Th_ConjDCEquiv}Let $G$ be a group and $A\leq G$.

1. Let $x,y\in G$. Then $G=AA^{x}A^{y}$ implies $y\in AA^{x}$. Moreover,
assuming that $y\in AA^{x}$, the following three conditions are equivalent:

(a) $G=AA^{x}A^{y}$.

(b) $G=\left( AzA\right) \left( AwA\right) $ where $z=x^{-1}$ and $w=xy^{-1}$%
.

(c) $G=AA^{x}A=\left( Ax^{-1}A\right) \left( AxA\right) $.

2. Let $x\in G$ then:

(i) $G=AA^{x}A$ if and only if $Ox$ intersects every orbit of the action of $%
A$ on $\Omega _{A}$, where $O$ is the unique $A$-orbit satisfying $%
Ax^{-1}\in O$. Furthermore, $Ox$ intersects every orbit of the action of $A$
on $\Omega _{A}$ if and only if $Oz$ intersects every orbit of the action of 
$A$ on $\Omega _{A}$, where $z\in AxA$ is arbitrary.

(ii) $G=\left( AxA\right) ^{2}\Longleftrightarrow G=AA^{x}A^{x^{2}}$.
\end{theorem}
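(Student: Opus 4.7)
The plan is that the whole theorem is a formal computation in the double-coset combinatorics of $A$ in $G$, repeatedly using $AA=A$ and $A^{g}=g^{-1}Ag$, together with the observation that right-multiplication of subsets of $G$ by a fixed element is a bijection of $G$ onto itself. No deep ingredient seems necessary; the main obstacle is purely bookkeeping, namely keeping track of many cancellations and inversions and making sure each "iff" really corresponds to a bijection.

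First I would address the initial implication in Part 1. Since $y\in G=AA^{x}A^{y}=A\cdot x^{-1}Ax\cdot y^{-1}Ay$, there exist $a_{1},a_{2},a_{3}\in A$ with $y=a_{1}\,x^{-1}a_{2}x\,y^{-1}a_{3}y$; the rightmost $y$ cancels and rearranging gives $y\in AA^{x}$. Next I would establish (a)$\Leftrightarrow$(b) via the single identity
\[
A A^{x} A^{y} \cdot y^{-1} \;=\; A\,x^{-1}A\,xy^{-1}\,A \;=\; (Ax^{-1}A)(A\,xy^{-1}A),
\]
obtained by two applications of $AA=A$. Since right-multiplication by $y^{-1}$ is a bijection $G\to G$, the statements $G=AA^{x}A^{y}$ and $G=(Ax^{-1}A)(Axy^{-1}A)$ are equivalent. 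For (b)$\Leftrightarrow$(c), the hypothesis $y\in AA^{x}=Ax^{-1}Ax$ gives $y=ax^{-1}bx$ for some $a,b\in A$, hence $xy^{-1}=b^{-1}xa^{-1}\in AxA$ and so $Axy^{-1}A=AxA$. This turns (b) into $G=(Ax^{-1}A)(AxA)=Ax^{-1}AxA=AA^{x}A$, which is (c).

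For Part 2(i), I would translate the condition into orbit language: the $A$-orbit of $Ag\in\Omega_{A}$ under right multiplication is $\{Ah:h\in AgA\}$, so $A$-orbits are in bijection with the double cosets $AgA$. Then $Ox=\{Ax^{-1}ax:a\in A\}=\{Ah:h\in Ax^{-1}Ax\}$, and $Ox$ meets the orbit represented by $Ag$ exactly when $Ax^{-1}Ax\cap AgA\neq\emptyset$; a short manipulation shows this is equivalent to $g\in Ax^{-1}AxA$. Hence "$Ox$ meets every orbit" is equivalent to $G=Ax^{-1}AxA=AA^{x}A$, giving (i). The "furthermore" clause is then immediate: for any $z\in AxA$ one has $Az^{-1}A=Ax^{-1}A$ and $AzA=AxA$, so $Az^{-1}AzA=Ax^{-1}AxA$, and $Az^{-1}\in O$ by the same token, so the orbit $O$ is unchanged. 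Finally, for Part 2(ii), I would expand $(AxA)^{2}=AxAxA$ and $AA^{x}A^{x^{2}}=Ax^{-1}Ax^{-1}Ax^{2}$; right-multiplying the latter by $x^{-2}$ and then applying the inversion bijection to the former reduces both equalities to $G=Ax^{-1}Ax^{-1}A$, finishing the proof.
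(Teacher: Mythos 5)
Your proof is correct and follows essentially the same route as the paper's: everything is reduced to elementary double-coset manipulations using $AA=A$, conjugation, and the fact that right translation and inversion are bijections of $G$. The only differences are organizational --- you prove (a)$\Leftrightarrow$(b) and part 2(ii) directly for the $k=3$ case where the paper routes them through a general lemma on products of $k$ conjugates, and you handle the ``furthermore'' clause of 2(i) by applying 2(i) to $z$ (noting $Az^{-1}\in O$ and $AA^{z}A=AA^{x}A$) rather than computing $Oz=\left( Ox\right) a_{2}$ as the paper does --- but the underlying computations are identical.
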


\begin{theorem}
\label{Th_gamma=3}Let $G$ be a finite non-solvable group. Then there exists $%
A<G$ and $x\in G$ such that $G=\left( AxA\right) ^{2}$. In particular $%
\gamma_{\text{cp}}\left( G\right) =3$.
\end{theorem}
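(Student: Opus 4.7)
My plan is to apply Theorem~\ref{Th_ConjDCEquiv}(2)(ii), which says that producing any proper $A<G$ and $x\in G$ with $G=(AxA)^{2}$ is equivalent to producing $G=AA^{x}A^{x^{2}}$; this will yield $\gamma_{\text{cp}}(G)=3$ since $\gamma_{\text{cp}}(G)\geq 3$ for all non-trivial $G$ is a short direct argument (if $G=AA^{x}$, writing $x=a_{1}\cdot x^{-1}a_{2}x$ with $a_{1},a_{2}\in A$ forces $x\in A$, hence $A^{x}=A$ and $A=G$). It therefore suffices to exhibit one such pair $(A,x)$ for every finite non-solvable $G$, and I would proceed in two stages: a reduction to almost simple groups followed by a verification along the classification of finite simple groups.

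The reduction rests on the following lifting lemma. If $N\triangleleft G$, $\pi\colon G\to G/N$ is the canonical projection, and $G/N=(\bar{A}\bar{x}\bar{A})^{2}$ for some proper $\bar{A}<G/N$, then setting $A=\pi^{-1}(\bar{A})$ and $x$ any preimage of $\bar{x}$ gives $A<G$ and $AxA=\pi^{-1}(\bar{A}\bar{x}\bar{A})$ (since $A\supseteq N$, the set $AxA$ is a union of $N$-cosets on both sides), whence $(AxA)^{2}=\pi^{-1}((\bar{A}\bar{x}\bar{A})^{2})=G$. Thus one may pass to any non-solvable quotient of $G$; choosing $N\triangleleft G$ maximal subject to $G/N$ being non-solvable, a standard analysis of the socle of the resulting minimal non-solvable quotient reduces the problem to the almost simple case $S\leq G\leq\mathrm{Aut}(S)$ for some non-abelian finite simple group $S$.

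The case analysis on $S$ then splits into three families. For $S$ of Lie type, the central $BN$-pair theorem of the paper supplies $S=(Bn_{0}B)^{2}$, where $B$ is a Borel subgroup of $S$ and $n_{0}$ is any preimage of the longest element of the Weyl group; for the almost simple overgroup $G$, one takes $A=N_{G}(B)$ (which is proper in $G$ because $B$ is not normal) and $x=n_{0}$, and verifies that the enlarged product still fills $G$ by lifting through the solvable quotient $G/S$. For $S=A_{n}$ with $n\geq 5$, the point stabilizer $A=A_{n-1}$ has only two double cosets in $A_{n}$, since $A$ acts transitively on the remaining $n-1$ cosets of $A$; a short counting argument then shows that the non-trivial double coset $Y=A_{n}\setminus A$ satisfies $1\in YY$, $A\subseteq YY$, and $Y\subseteq YY$, forcing $YY=A_{n}$, and the same reasoning covers $S_{n}$. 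For $S$ sporadic, a finite computer search over conjugacy classes of maximal subgroups $A$ of $G$ and representatives $x$ of $A$-$A$ double cosets suffices.

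The principal obstacle I anticipate is the passage from the simple to the almost simple Lie type case: one must track diagonal, field, and graph outer automorphisms and verify that the normalizer enlargement $B\to N_{G}(B)$ preserves the $BN$-pair factorization. The sporadic verification, while finite in principle, also presents practical computational challenges for the larger sporadic groups, and one may need a partially structural argument (e.g.\ restricting to a convenient 2-transitive maximal subgroup) to avoid a brute force search for the very large cases.
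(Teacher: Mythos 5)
Your overall architecture --- reduce to a minimal non-solvable group, then run through the classification with the $BN$-pair theorem $S=\left(Bn_{0}B\right)^{2}$ carrying the Lie-type case, a $2$-transitivity/two-double-coset argument for $\mathrm{Alt}(n)$, and a computation for the sporadic groups --- is the same as the paper's, and your lifting lemma and your lower bound $\gamma_{\text{cp}}(G)\geq 3$ are fine. But there is a genuine gap in your reduction to the almost simple case. A minimal non-solvable group $G$ has, by definition, no proper non-solvable quotient; if its unique minimal normal subgroup is $N\cong S^{k}$ with $k>1$, then $G$ is not almost simple and cannot be made almost simple by passing to a further quotient. So the reduction is emphatically not a quotient argument. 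The paper (via the $\beta_{\text{cp}}$ analogue of Lemma 14 of \cite{GaronziLevy}) instead manufactures the required subgroup of $G$ from a subgroup $A$ of one simple component $S$ together with its normalizer, and this forces a supplementary condition on $A$ beyond $S=\left(AxA\right)^{2}$: one needs $N_{X}(A)S=X$ for \emph{every} almost simple $X$ with $\operatorname{soc}(X)=S$ (condition (b) of Claim \ref{Claim_BetaReduction}). This condition genuinely constrains the choice of $A$ in each branch of the case analysis: it is why the paper proves $N_{X}(B)S=X$ for Lie type via $B=N_{S}(P)$ and a Frattini-type Sylow argument (no tracking of diagonal, field and graph automorphisms is needed, contrary to the obstacle you anticipate), why for sporadic $S$ with $\left\vert \mathrm{Out}(S)\right\vert =2$ it must take $A$ maximal with $N_{\mathrm{Aut}(S)}(A)\neq A$, and why $S=O^{\prime}N$ requires the special choice $A=J_{1}$. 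Your proposal never identifies this condition, and your plan of searching separately inside each almost simple $G$ does not produce the data needed when the socle of the minimal non-solvable group is $S^{k}$ with $k>1$.

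A secondary, practical gap: a brute-force search that enumerates double cosets of maximal subgroups and squares them is not feasible for the large sporadic groups (e.g.\ $B$ and $M$). The paper's verification rests on the Hecke-algebra formulation of Section \ref{Sect_DC}: $\left(AxA\right)\left(AyA\right)=G$ if and only if the corresponding row of the collapsed adjacency matrix $P_{y}$ has no zero entries, and these matrices are computable from multiplicity-free permutation characters via the \texttt{mfer} database. You would need this (or an equivalent structural device) to close the sporadic case; your suggestion of ``a partially structural argument'' acknowledges the problem but does not solve it.
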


\begin{remark}
We do not know if for a general group $G$ the existence of $z,w\in G$ such
that $G=\left( AzA\right) \left( AwA\right) $ implies the existence of $x\in
G$ such that $G=\left( AxA\right) ^{2}$, or even if the seemingly weaker
implication which states that $\gamma _{\text{cp}}\left( G\right) =3$
implies the existence of $A<G$ and $x\in G$ such that $G=\left( AxA\right)
^{2}$, is true.
\end{remark}

\begin{corollary}
\label{Coro_AtMost3}Any finite group which is not a cyclic $p$-group has a
factorization of the form $G=ABA$, where $A$ and $B$ are proper subgroups of 
$G$.
\end{corollary}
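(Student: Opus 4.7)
The plan is to split the class of finite groups that are not cyclic $p$-groups into three mutually exhaustive cases: (i) $G$ is not solvable; (ii) $G$ is solvable but not of prime power order; (iii) $G$ is a non-cyclic $p$-group. In each case I will produce explicit proper subgroups $A,B$ with $G=ABA$.

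For Case (i) I would invoke Theorem \ref{Th_gamma=3} to secure $A<G$ and $x\in G$ with $G=(AxA)^{2}$. Part (2)(ii) of Theorem \ref{Th_ConjDCEquiv} reformulates this as $G=AA^{x}A^{x^{2}}$, and the opening statement of part (1) of that theorem then forces $x^{2}\in AA^{x}$. With this containment in hand, the equivalence (a)$\Leftrightarrow$(c) of Theorem \ref{Th_ConjDCEquiv}(1), applied with $y=x^{2}$, collapses the three-fold product to $G=AA^{x}A$. Setting $B:=A^{x}$, which is proper since it is conjugate to $A<G$, furnishes the desired factorization $G=ABA$.

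For Case (ii) I would pick any prime $p$ dividing $|G|$ and apply Hall's theorem, obtaining a Sylow $p$-subgroup $P$ and a Hall $p'$-complement $H$ with $G=PH$. Because $|G|$ has a second prime divisor, both $P$ and $H$ are proper, and $G=PH\subseteq PHP\subseteq G$ delivers $G=PHP$. For Case (iii) I would use the fact that the Frattini quotient $G/\Phi(G)$ is elementary abelian of rank at least two, so $G$ has two distinct maximal subgroups $M_{1}\neq M_{2}$, each of index $p$. Then $M_{1}M_{2}$ is a union of left cosets of $M_{1}$ that properly contains $M_{1}$, so $|M_{1}M_{2}|>|G|/p$, forcing $M_{1}M_{2}=G$ and hence $G=M_{1}M_{2}M_{1}$.

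The substance of the argument is concentrated entirely in Case (i), where the work is carried by the deep Theorem \ref{Th_gamma=3} together with the double-coset dictionary of Theorem \ref{Th_ConjDCEquiv}; Cases (ii) and (iii) reduce to immediate applications of Hall's theorem and the Burnside basis theorem, respectively. No further obstacle arises because the three cases plainly cover every finite group that is not a cyclic $p$-group.
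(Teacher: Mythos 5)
Your Case (i) is essentially the paper's argument: the paper likewise derives $G=AA^{x}A$ from Theorems \ref{Th_gamma=3} and \ref{Th_ConjDCEquiv} and takes $B=A^{x}$; your spelled-out chain through part (2)(ii), the forced containment $x^{2}\in AA^{x}$, and the equivalence (a)$\Leftrightarrow$(c) is exactly what the paper leaves implicit. Where you genuinely diverge is the solvable case. The paper handles all solvable groups that are not cyclic $p$-groups in one stroke: it takes a maximal normal subgroup $N$ (so $G/N$ is cyclic of prime order $p$) and any proper subgroup $H\not\leq N$, obtains $G=HN=NHN$, and shows that if no such $H$ exists then every element of $G-N$ generates $G$ and $G$ would be a cyclic $p$-group. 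Your split into a Hall-complement argument for solvable non-$p$-groups and a Frattini-quotient argument for non-cyclic $p$-groups is also valid and uses only standard structure theory, at the cost of one extra case; the paper's version buys uniformity and avoids invoking Hall's theorem. One small repair is needed in your Case (iii): from ``$M_{1}M_{2}$ is a union of cosets of $M_{1}$ properly containing $M_{1}$'' you only get $\left\vert M_{1}M_{2}\right\vert \geq 2\left\vert G\right\vert /p$, which is strictly less than $\left\vert G\right\vert$ when $p\geq 3$, so the size estimate alone does not force $M_{1}M_{2}=G$. The immediate fix is to note that maximal subgroups of a $p$-group are normal, so $M_{1}M_{2}$ is a subgroup strictly containing the maximal subgroup $M_{1}$ and hence equals $G$; alternatively, $\left\vert M_{1}M_{2}\right\vert =\left\vert M_{1}\right\vert \left\vert M_{2}\right\vert /\left\vert M_{1}\cap M_{2}\right\vert \geq \left\vert G\right\vert$ because $M_{1}\cap M_{2}$ has index at least $p$ in $M_{1}$.
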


A major part of the proof of Theorem \ref{Th_gamma=3} relies on the next
theorem, which is, however, of wider scope and interest. We recall that a
group $G$ is said to be a group with a $BN$-pair (\cite{Carter2}, Section
2.1) if there exist subgroups $B$ and $N$ of $G$ such that: (i) $%
G=\left\langle B,N\right\rangle $; (ii) $H:=B\cap N\trianglelefteq N$; (iii) 
$W:=N/H$ (the Weyl group of the $BN$-pair) is generated by a set of elements 
$s_{i}$, $i\in I$, where $I$ is some indexing set, and $s_{i}^{2}=1$ for all 
$i\in I$; (iv) For any $n_{i}\in N$ such that $s_{i}=n_{i}H$, it holds that $%
n_{i}Bn_{i}\neq B$; (v) $n_{i}Bn\subseteq Bn_{i}nB\cup BnB$ for all $n\in N$
and $n_{i}\in N$ such that $s_{i}=n_{i}H$. By assumption, any element of $W$
can be written as a product of some of the $s_{i}$. We define the length
function $l:W\rightarrow \mathbb{N}_{0}$ (\cite{Carter2}, Section 2.1), by
the conditions $l\left( 1_{W}\right) =0$ and for any $1_{W}\neq w\in W$, the
positive integer $l\left( w\right) $ is the minimal length of an expression
for $w$ as a product of the $s_{i}$. If $W$ is finite there exists a
(unique) element $w_{0}\in W$ such that $l\left( w_{0}\right) >l\left(
w\right) $ for all $w\in W$, $w\neq w_{0}$ (\cite{Carter2}, Proposition
2.2.11).

\begin{theorem}
\label{Th_G=BB_B for BN-pair}Let $G$ be a group with a $BN$-pair and a
finite Weyl group. Let $n_{0}\in N$ be such that $n_{0}H=w_{0}$. Then $%
G=\left( Bn_{0}B\right) ^{2}=BB^{n_{0}}B$.
\end{theorem}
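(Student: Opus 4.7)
The plan is first to reduce the two stated equalities to one, and then to prove the remaining equality by working in the dioid $\mathcal{D}$ of unions of $B$-double cosets in $G$ (with addition being union and multiplication being set product).

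For the reduction, since $W$ is finite and $w_0$ is its unique longest element, $w_0 = w_0^{-1}$, so $n_0^{-1} \in n_0 H \subseteq Bn_0B$, giving $Bn_0^{-1}B = Bn_0B$. A direct computation then yields
\[ BB^{n_0}B = Bn_0^{-1}Bn_0B = (Bn_0^{-1}B)(Bn_0B) = (Bn_0B)^2, \]
so the two equalities are equivalent; it suffices to prove $G = (Bn_0B)^2$.

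Writing $\sigma_w := BwB$, the Bruhat decomposition reads $G = \sum_{w \in W} \sigma_w$ in $\mathcal{D}$, and the standard product formula (derived from axioms (iv) and (v)) gives, for a simple reflection $s_i$ and $w \in W$, $\sigma_i \sigma_w = \sigma_{s_i w}$ if $l(s_i w) > l(w)$, and $\sigma_i \sigma_w = \sigma_w + \sigma_{s_i w}$ if $l(s_i w) < l(w)$. In particular $\sigma_i^2 = B + \sigma_i$. So the task becomes to show $\sigma_v \leq \sigma_{w_0}^2$ for every $v \in W$.

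To this end I would exploit the palindromic form of $\sigma_{w_0}^2$: picking a reduced expression $w_0 = s_{i_1} \cdots s_{i_N}$ and using that $s_{i_N} \cdots s_{i_1}$ is also reduced (since $w_0 = w_0^{-1}$), one obtains
\[ \sigma_{w_0}^2 = \sigma_{i_1} \sigma_{i_2} \cdots \sigma_{i_N} \sigma_{i_N} \sigma_{i_{N-1}} \cdots \sigma_{i_1}. \]
Iteratively applying $\sigma_{i_k}^2 = B + \sigma_{i_k}$ to the innermost adjacent pair decomposes $\sigma_{w_0}^2 = B + \sum_{k=1}^N Q_k$, where $Q_k := \sigma_{u_{k-1}} \sigma_{i_k} \sigma_{u_{k-1}^{-1}} = \sigma_{u_k} \sigma_{u_{k-1}^{-1}}$ with $u_j := s_{i_1} \cdots s_{i_j}$. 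The main obstacle is the combinatorial verification that this sum contains $\sigma_v$ for every $v \in W$: using the subword property of Bruhat order (every $v \leq w_0$ admits a reduced expression as a subword of any reduced expression of $w_0$) together with iterative application of the product formula inside each $Q_k$, one would check that the $Q_k$'s collectively account for every $\sigma_v$, so that $\sigma_{w_0}^2 = \sum_{w \in W} \sigma_w = G$.
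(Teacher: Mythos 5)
Your reduction of the two equalities to $G=(Bn_0B)^2$ is fine, and your telescoping identity $\sigma_{w_0}^2=B+\sum_{k=1}^{N}Q_k$ with $Q_k=\sigma_{u_k}\sigma_{u_{k-1}^{-1}}$ is correct (it follows from $\sigma_{i}^2=B+\sigma_i$ and the fact that prefixes of a reduced word are reduced). But the entire content of the theorem is the assertion that the $Q_k$ ``collectively account for every $\sigma_v$,'' and at that point your argument stops: ``one would check'' is not a proof, and the tools you name do not obviously deliver it. The subword property tells you that each $v\le w_0$ has a reduced expression which is a subword of $s_{i_1}\cdots s_{i_N}$, but there is no evident mechanism converting a subword of the chosen reduced expression into an occurrence of $\sigma_v$ inside some $Q_k$: the cosets appearing in $Q_k=\sigma_{u_k}\sigma_{u_{k-1}^{-1}}$ are governed by the delicate combinatorics of repeated applications of the two-case product formula, and simple guesses fail. (For instance, the obvious candidate statement ``$\sigma_x\sigma_{x^{-1}}\supseteq\sigma_v$ for all $v\le x$'' is false: in type $A_2$ with $w_0=s_1s_2s_1$ one computes $\sigma_{s_1s_2}\sigma_{s_2s_1}=B+\sigma_{s_1}+\sigma_{w_0}$, which misses $\sigma_{s_2}$ even though $s_2\le s_1s_2$. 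All of $\sigma_{s_2}$, $\sigma_{s_2s_1}$ only show up in the last term $Q_3=\sigma_{w_0}\sigma_{s_2s_1}$.) Cheaply one only gets $Q_k\supseteq\sigma_{t_k}$ with $t_k=u_{k-1}s_{i_k}u_{k-1}^{-1}$ a reflection, i.e.\ that all reflections are covered --- far from all of $W$. So the proposal has a genuine gap exactly where the real work lies.

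For comparison, the paper avoids this combinatorial bookkeeping entirely by a direct argument for each fixed $w\in W$: choose a reduced word $a$ for $ww_0$ and extend it to a reduced word $ab$ for $w_0$ (possible because $l(ww_0)<l(w_0)$ when $w\ne 1$), so that $\sigma_{w_0}=\sigma_{ww_0}\,\sigma_{\tau(b)}$ with lengths adding. Then two one-line containments finish the job: $\sigma_{w_0}\subseteq\sigma_{\tau(b)}\sigma_{w_0}$ (because left-multiplying $\sigma_{w_0}$ by any $\sigma_{s_i}$ can only add cosets, as $l(s_iw_0)<l(w_0)$ always), whence $\sigma_{ww_0}\sigma_{w_0}\subseteq\sigma_{ww_0}\sigma_{\tau(b)}\sigma_{w_0}=\sigma_{w_0}^2$; and $\sigma_{w}=\sigma_{(ww_0)w_0}\subseteq\sigma_{ww_0}\sigma_{w_0}$ (the general containment $\sigma_{xy}\subseteq\sigma_x\sigma_y$). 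If you want to salvage your decomposition, you would need to prove an analogous covering statement for the $Q_k$; the paper's factorization $w_0=(ww_0)(w_0w^{-1}w_0)$ is the missing idea that makes the covering of an arbitrary $\sigma_w$ immediate.
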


\begin{remark}
\label{Rem_InfiniteWeyl}If $W$ is infinite then $G$ is not equal to any
(finite) product of double cosets of $B$.
\end{remark}

\begin{corollary}
\label{Coro_Gauss}Let $G$ be a group with a $BN$-pair and a finite Weyl
group $W$. Let $U\leq B$ satisfy $B=UH$. Then $G=HUU^{n_{0}}U$.
\end{corollary}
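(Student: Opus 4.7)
The plan is to deduce the corollary from Theorem \ref{Th_G=BB_B for BN-pair}, which gives $G = BB^{n_{0}}B$, together with the hypothesis $B = UH$ and the internal structure of $H$. Two preliminary facts drive the argument. First, axiom (ii) of the $BN$-pair definition yields $H \trianglelefteq N$, and since $n_{0}\in N$ this forces $H^{n_{0}} = H$ and hence $B^{n_{0}} = (UH)^{n_{0}} = U^{n_{0}}H^{n_{0}} = U^{n_{0}}H$. Second, because $UH = B$ is a subgroup of $G$, the standard criterion that a product of two subgroups is itself a subgroup precisely when the two subgroups permute gives the set equality $UH = HU$; conjugating by $n_{0}$ and using $H^{n_{0}} = H$ produces also $U^{n_{0}}H = HU^{n_{0}}$.

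Writing each of the three factors of $BB^{n_{0}}B$ in whichever of the equivalent forms $UH$, $HU$, $U^{n_{0}}H$, $HU^{n_{0}}$ is convenient, I would then compute
\[
G \;=\; B\,B^{n_{0}}\,B \;=\; (UH)(HU^{n_{0}})(HU) \;=\; UHU^{n_{0}}HU,
\]
where the last equality uses $HH = H$. Substituting $HU^{n_{0}}$ for $U^{n_{0}}H$ in the middle of the word converts this to $UH\cdot HU^{n_{0}}\cdot U = UHU^{n_{0}}U$ (absorbing $HH=H$ once more), and a final application of $UH = HU$ yields $G = HUU^{n_{0}}U$, as required; the reverse containment is automatic since $H, U \leq G$.

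I do not anticipate a substantive obstacle here: Theorem \ref{Th_G=BB_B for BN-pair} already carries all of the genuine content of the corollary, and what remains is purely formal set-theoretic rearrangement. The one point worth flagging is that the argument does \emph{not} require $H$ to normalize $U$ element-wise, as it would in a semidirect product $B = U \rtimes H$; only the weaker set-level identity $UH = HU$, which is automatic from $B = UH$ being a subgroup, is being used in the shuffling of factors.
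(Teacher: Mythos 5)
Your proof is correct and follows essentially the same route as the paper, which likewise derives the corollary from Theorem \ref{Th_G=BB_B for BN-pair} by substituting $B=UH$ into $G=BB^{n_{0}}B$ and using that $H$ commutes (as a set) with both $U$ and $n_{0}$. You have merely written out explicitly the factor-shuffling that the paper leaves as a one-line remark, and your observation that only the set-level identity $UH=HU$ is needed is accurate.
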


Note that in particular, Corollary \ref{Coro_Gauss} applies to any group
with a split $BN$-pair (\cite{Carter2}, Section 2.5). For the origin,
applications and discussion of necessary and sufficient conditions for the
identity $G=HUU^{n_{0}}U$ in the context of Chevalley groups, we refer the
reader to \cite{VavilovSmolenskySury2}.

In the case of simple groups of Lie type, the proof of Theorem \ref%
{Th_G=BB_B for BN-pair} can be obtained from \cite{Kawanaka} where the
structure constants of Iwahori-Hecke algebras are described. The starting
point for the characterization of these constants is \cite{BorelTits}, which
describes the product of double cosets for Borel subgroups in the context of
connected reductive linear algebraic groups. Although in proving Theorem \ref%
{Th_gamma=3} we do make use of the Hecke algebra approach which represents
double cosets by group algebra sums, and then computes the structure
constants, the proof of Theorem \ref{Th_G=BB_B for BN-pair} takes advantage
of the fact that one only needs to distinguish between zero and non-zero
structure constants. This leads us to consider another algebraic structure,
which we call the DC dioid, to handle double cosets. Both approaches are
introduced and discussed in Section \ref{Sect_DC}.

A special case of Theorem \ref{Th_G=BB_B for BN-pair} is the case where $G$
is a connected reductive linear algebraic group (see for example [\cite%
{MalleTesterman}, Theorem 11.16]). In this case one can offer a different
proof which is based on a topological argument. This argument is applicable
to any group $G$ such that $G$ is a topological space, and right and left
multiplications by fixed elements of $G$ are continuous maps (such a group
is customarily called a semi-topological group). The core of the argument
relies on standard topological considerations, and it is clear and easy to
grasp, so we hope it can be applied for other groups as well.

\begin{proposition}
\label{Prop_semiTopo}Let $G$ be a semi-topological group. If $A$ is an open
dense subset of $G$ then $G=AA^{-1}$. In particular, if $A=A^{-1}$ then $%
G=A^{2}$.
\end{proposition}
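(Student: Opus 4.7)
The plan is to use that, in a semi-topological group, left multiplication by any fixed element is a homeomorphism, and to combine this with the standard fact that an open dense subset meets every dense subset.

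First I would fix an arbitrary $g\in G$ and observe that the left-translation map $L_{g}\colon G\to G$, $x\mapsto gx$, is a homeomorphism of $G$. Indeed, by the semi-topological hypothesis $L_{g}$ and $L_{g^{-1}}$ are continuous, and they are mutually inverse bijections. Consequently, the preimage $L_{g}^{-1}(A)=\{x\in G:gx\in A\}$, which as a set coincides with $g^{-1}A$, is an open dense subset of $G$.

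Next I would intersect $A$ with $g^{-1}A$. Since $A$ is open and dense and $g^{-1}A$ is open and dense, $A\cap g^{-1}A$ is (open and) dense; in particular it is nonempty provided $G$ itself is nonempty, which we may assume since the statement is trivial otherwise. Choosing any $b\in A\cap g^{-1}A$, we have $b\in A$ and $a:=gb\in A$, so that $g=ab^{-1}\in AA^{-1}$. Since $g$ was arbitrary, this proves $G=AA^{-1}$, and the final assertion $G=A^{2}$ in the case $A=A^{-1}$ is immediate.

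There is no serious obstacle here; the only subtlety worth flagging is that we do not need joint continuity of multiplication: separate continuity is enough because the argument uses only that $L_{g}$ is a homeomorphism for each fixed $g$, which already follows from the assumption that both $L_{g}$ and $L_{g^{-1}}$ are continuous. Note also that the argument does not use any Baire-category hypothesis, because the intersection of an open dense set with a dense set is automatically nonempty.
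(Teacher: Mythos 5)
Your proof is correct and takes essentially the same route as the paper: translate $A$ by the fixed element $g$, use that the translate is open while $A$ is dense to produce a point of the intersection, and read off $g=ab^{-1}\in AA^{-1}$. The paper works with $gA$ instead of $g^{-1}A$ and does not bother to note that the translate is also dense (only openness of the translate and density of $A$ are needed), but these are immaterial differences.
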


\begin{corollary}
\label{Coro_GReducedG=BBBFromTopo}Let $G$ be a connected reductive linear
algebraic group, $B$ a Borel subgroup of $G$, and $n_{0}$ as in Theorem \ref%
{Th_G=BB_B for BN-pair}. Then, since $Bn_{0}B$ is open and dense in $G$, $%
G=\left( Bn_{0}B\right) ^{2}$.
\end{corollary}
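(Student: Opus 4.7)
The plan is to derive the corollary as a direct application of Proposition \ref{Prop_semiTopo} to the subset $A := Bn_{0}B$. The ``in particular'' clause of that proposition requires three inputs: $G$ must be a semi-topological group, $A$ must be open and dense in $G$, and $A$ must be closed under inversion.

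The first is immediate, since every linear algebraic group is a topological group in the Zariski topology (multiplication and inversion are morphisms of varieties, hence continuous), which is stronger than being semi-topological. The second is the classical geometric statement about the ``big cell'' alluded to in the corollary and recorded as [MalleTesterman, Theorem 11.16]: the Bruhat decomposition $G = \bigsqcup_{w \in W} BwB$ together with the dimension inequality $\dim(BwB) < \dim(Bw_{0}B) = \dim G$ for $w \neq w_{0}$ forces $Bn_{0}B$ to be open and dense.

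For the third input I would use that the longest element $w_{0}$ is an involution in $W$. Indeed, $l(w_{0}^{-1}) = l(w_{0})$ combined with the uniqueness of the longest element gives $w_{0}^{-1} = w_{0}$, so $n_{0}^{-1} \in n_{0}H \subseteq Bn_{0}B$. Therefore $(Bn_{0}B)^{-1} = Bn_{0}^{-1}B \subseteq Bn_{0}B$, and applying inversion to this containment forces equality. Proposition \ref{Prop_semiTopo} then yields $G = (Bn_{0}B)^{2}$. No single step is an obstacle here; the point of the corollary is precisely that, for connected reductive groups, the topological proposition cleanly packages the combinatorial content of Theorem \ref{Th_G=BB_B for BN-pair} via density of the big cell, bypassing the dioid argument altogether.
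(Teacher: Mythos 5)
Your proposal is correct and follows essentially the same route as the paper: apply Proposition \ref{Prop_semiTopo} to $A=Bn_{0}B$, using $w_{0}^{2}=1$ for closure under inversion and the standard density/openness of the big cell. One small inaccuracy worth fixing: a linear algebraic group is \emph{not} in general a topological group in the Zariski topology, since multiplication $G\times G\to G$ is continuous only for the Zariski topology on the product variety, which is strictly finer than the product topology; however, left and right translations by fixed elements are variety isomorphisms and hence continuous, so $G$ is semi-topological, which is all that Proposition \ref{Prop_semiTopo} requires.
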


Returning to the setting of Theorem \ref{Th_G=BB_B for BN-pair}, let $G$ be
a group with $BN$-pair and finite Weyl group $W$. For any subset $I^{\prime
}\subseteq I$, the standard parabolic subgroup $W_{I^{\prime }}$ of $W$ is
defined by $W_{I^{\prime }}:=\left\langle s_{i}|i\in I^{\prime
}\right\rangle $. There is a bijection (\cite{Carter1}, Section 8.3) between
overgroups of $B$ (the standard parabolic subgroups of $G$ w.r.t $B$ and $N$%
) and standard parabolic subgroups of $W$ given by $W_{I^{\prime
}}\longleftrightarrow BW_{I^{\prime }}B$ (here $W_{I^{\prime }}$ is regarded
as a union of left cosets of $H$ in $N$). Furthermore, since any standard
parabolic subgroup $P$ of $G$ contains $B$, it is immediate from Theorem \ref%
{Th_G=BB_B for BN-pair} that $G=PP^{n_{0}}P$. Since $W$ is a finite Coxeter
group (see \cite{Carter2} Chapter 2) it is natural to consider the question
of which irreducible finite Coxeter groups (the terminology is explained in
Section \ref{SECT_Coxeter}) are equal to a product of three conjugate
parabolic subgroups. This is answered by Theorem \ref{Th_WeylParabolics}.
Note that by the above mentioned bijection between the standard parabolics
of $W$ and those of $G$, to any \ factorization of $W$ by three conjugates
of some $W_{I^{\prime }}$ there is a naturally corresponding factorization
of $G$ (see Remark \ref{Rem_FromWToG} below).

\begin{theorem}
\label{Th_WeylParabolics}Let $C$ be a finite irreducible Coxeter group. Then 
$C$ is the product of three conjugates of at least one of its proper
parabolic subgroups if $C$ is of one of the following types: $A_{n}$, $%
n\geq2 $, $B_{n}$, $n\geq3$, $D_{n}$, $n\geq4$, $H_{4}$, $E_{6}$, $E_{7}$, $%
E_{8}$. Otherwise, for the remaining types $A_{1}$, $B_{2}$, $F_{4}$, $H_{3}$%
, $I_{2}\left( m>4\right) $, $C$ is not a product of three conjugates of a
proper parabolic subgroup.
\end{theorem}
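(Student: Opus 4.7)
The plan is to proceed type by type, using Theorem~\ref{Th_ConjDCEquiv} to switch freely between three-conjugate factorizations $C=PP^{x}P^{y}$ and products of two double cosets. For positive types I exhibit a proper parabolic $P\leq C$ together with $x\in C$ such that $C=(PxP)^{2}$, which yields a three-conjugate factorization; for negative types I rule out every three-conjugate factorization $C=PP^{x}P^{y}$ by a cardinality or containment obstruction.

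For the negative cases the obstructions are as follows. In $A_{1}$ the only proper parabolic is trivial. In $I_{2}(m)$ with $m>4$ every proper parabolic has order at most $2$, so $|PP^{x}P^{y}|\leq|P|^{3}\leq 8$ while $|I_{2}(m)|=2m\geq 10$. For $B_{2}=I_{2}(4)$ this bound only yields $|PP^{x}P^{y}|\leq 8=|B_{2}|$, so a finer argument is needed; I would check directly that each $B_{2}$-conjugacy class of rank-one parabolics is contained in a Klein four subgroup of index~$2$ (for $\langle s\rangle$ this is the subgroup $\langle s,r^{2}\rangle$, with $r$ a rotation generator), so any product of conjugates of a proper parabolic remains in a proper subgroup. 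For $H_{3}$ and $F_{4}$ no such immediate obstruction applies, and I would settle them by exhaustive computer enumeration over conjugacy classes of maximal parabolics $P$ and over $(P,P)$-double coset representatives, verifying $(PxP)(PyP)\neq C$ in every case.

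For the positive infinite families I would argue by an elementary counting in the natural permutation representation. For $A_{n}=S_{n+1}$, $n\geq 2$, take $P=S_{n}$ the stabilizer of $n+1$ and $x$ any element moving $n+1$; then $PxP=S_{n+1}\setminus P$ has $n\cdot n!$ elements, and writing an arbitrary $g\in S_{n+1}$ as $ab$ with $a,b\in PxP$ reduces to choosing $a(n+1)\in\{1,\ldots,n\}\setminus\{g(n+1)\}$, a nonempty set for $n\geq 2$. For $B_{n}$, $n\geq 3$, take $P=B_{n-1}$ the stabilizer of $n$ under the signed permutation action on $\{\pm 1,\ldots,\pm n\}$; the $P$-orbits there have sizes $1,1,2(n-1)$, giving three $(P,P)$-double cosets, and taking $x$ in the largest reduces $(PxP)^{2}=B_{n}$ to choosing $a(n)\in\{\pm 1,\ldots,\pm(n-1)\}\setminus\{\pm g(n)\}$, possible precisely when $2(n-1)>2$, i.e.~$n\geq 3$. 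The analogous counting with $P=D_{n-1}$ in the $D_{n}$-action on $\{\pm 1,\ldots,\pm n\}$ handles $D_{n}$ for $n\geq 4$.

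For the exceptional positive types $E_{6},E_{7},E_{8},H_{4}$ I would produce a pair $(P,x)$ with $(PxP)^{2}=C$ by direct computer verification: enumerate conjugacy classes of maximal parabolics, choose a minimal length representative of the longest $(P,P)$-double coset, and check the identity in GAP or Magma. The main obstacle in this plan is the tightness of the counting argument at the small-rank boundary of the classical families --- $B_{2}$ fails while $B_{3}$ succeeds, and similarly $D_{n}$ requires $n\geq 4$ --- so the boundary cases have to be separated from the generic argument. A secondary obstacle is furnishing a rigorous nonexistence proof for $H_{3}$ and $F_{4}$, where no uniform containment or cardinality obstruction seems available and one must rely on the exhaustiveness of a computer enumeration over conjugacy classes of maximal parabolics and double coset representatives.
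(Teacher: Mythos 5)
Your proposal is correct, and for the classical families $B_{n}$ and $D_{n}$ it takes a genuinely different route from the paper. Where you work in the signed permutation representation on $\{\pm 1,\ldots ,\pm n\}$, identify the three $\left( P,P\right) $-double cosets of the point stabilizer $P\cong B_{n-1}$ (resp.\ $D_{n-1}$) with the suborbits of sizes $1,1,2\left( n-1\right) $, and show $\left( PxP\right) ^{2}=C$ by the count $a\left( n\right) \in \{\pm 1,\ldots ,\pm (n-1)\}\setminus \{\pm g\left( n\right) \}$, the paper instead exploits the semidirect product structure $C=V\rtimes S_{n}$ directly: it proves $C=C_{1}C_{i}C_{1}$ with $C_{i}=V_{i}\rtimes S_{\Omega _{i}}$ by showing that $V_{1}V_{i}^{\pi ^{-1}}=V$ for any two distinct index-two subgroups of $V$ and then reducing to the $A_{n}$ case for the symmetric group factor. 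Your argument produces a factorization of the stronger form $\left( PxP\right) ^{2}$ (hence $C=PP^{x}P^{x^{2}}$ via Theorem \ref{Th_ConjDCEquiv}(2)(ii)), and it makes the failure at the rank boundary ($n=2$ for $B$, via $2(n-1)=2$) visible in the same computation; the paper's argument avoids any double-coset bookkeeping and sees at once that two of the three conjugates may be taken equal. Your treatment of $A_{n}$ via two double cosets of a point stabilizer is essentially the paper's $2$-transitivity argument, and your handling of the negative and exceptional cases matches the paper's in substance: the paper dismisses $B_{2}$ by noting $\left\vert C\right\vert =8$ forces nilpotency (your explicit Klein-four containment is the underlying reason), rules out $I_{2}\left( m\right) $, $m\geq 5$, by the same cardinality bound $\left\vert P\right\vert ^{3}\leq 8<2m$, and settles $F_{4}$, $H_{3}$, $H_{4}$, $E_{6}$, $E_{7}$, $E_{8}$ by a GAP computation (using criterion 2(i) of Theorem \ref{Th_ConjDCEquiv}, which by part 1 of that theorem lets one test only the products $\left( Px^{-1}P\right) \left( PxP\right) $ rather than all pairs of double cosets as you propose --- a modest but worthwhile saving). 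Both approaches correctly reduce to maximal parabolics, and both are complete.
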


\begin{remark}
\label{Rem_FromWToG}Let $G$ be a group with $BN$-pair and a finite Weyl
group $W$ whose generators are indexed by $I$. Suppose that $W$ is the
product of three conjugates of the standard parabolic subgroup $W_{I^{\prime
}}$ where $I^{\prime }\subseteq I$. Then $G$ is the product of the
corresponding three conjugates of the standard parabolic subgroup\ $%
P_{I^{\prime }}=BW_{I^{\prime }}B$.
\end{remark}

Finally, we mention other recent papers which discuss similar problems to
those addressed in the present paper. Liebeck, Nikolov and Shalev (\cite%
{LiebeckNikolovShalev1}) consider products of conjugate subgroups in finite
simple groups. They conjecture that there exists a universal constant $c$
such that for every non-trivial subgroup $A$ of a simple group $G$, the
minimal number of conjugates of $A$ such that the setwise product of these
conjugates is $G$, is bounded above by $c\log \left\vert G\right\vert /\log
\left\vert A\right\vert $. Later, in \cite{LiebeckNikolovShalev2}, they
extend this conjecture to subsets of $G$ of size at least $2$. For a
discussion of triple factorizations of the form $G=ABA$ for $A,B<G$, see 
\cite{AlaviBurness} and \cite{AlaviPrager} and the references therein.
Liebeck and Pyber have proved (\cite{LiebeckPyber} Theorem D) that every
finite simple group of Lie type is a product of no more than $25$ Sylow $p$%
-subgroups, where $p$ is the defining characteristic. In \cite%
{BabaiNikolovPyber} it is claimed, without proof, that the $25$ can be
replaced by $5$, while a sketch of a proof of this claim for exceptional Lie
type groups appears in a survey by Pyber and Szab\'{o} (\cite{PyberSzabo}
Theorem 15). For (non-twisted) Chevalley groups the best possible bound of $%
4 $ is shown to hold by Smolensky, Sury and Vavilov in \cite%
{VavilovSmolenskySury}.

\section{Products of Double Cosets\label{Sect_DC}}

In this section we prove Theorem \ref{Th_ConjDCEquiv} and describe two
algebraic frameworks for discussing products of double cosets.

Let $G$ be any group and $A\leq G$. Recall that the set of all double cosets
of $A$ in $G$ forms a partition of $G$ and $AxA=Ax^{\prime }A$ if and only
if $x^{\prime }\in AxA$. By a product of two double cosets of $A$ we mean
their setwise product, and $\left( AzA\right) \left( AwA\right) $ is equal
to a disjoint union of some double cosets of $A$.

The following lemma is needed for the proof of Theorem \ref{Th_ConjDCEquiv}
and, in fact, gives a more general version of part of its claims.

\begin{lemma}
\label{Lem_GeneralCOnjDCrelation}Let $G$ be a group, $A\leq G$ and $%
A_{1},A_{2},\ldots ,A_{k}$ are $k\geq 3$ conjugates of $A$ in $G$. Then $%
G=A_{1}A_{2}\cdots A_{k}$ if and only if $G=\left( Ax_{1}A\right) \left(
Ax_{2}A\right) \cdots \left( Ax_{k-1}A\right) $ for some $x_{1},\ldots
,x_{k-1}\in G$.
\end{lemma}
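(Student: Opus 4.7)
The plan is to reduce both sides of the claimed equivalence to the same statement by an explicit manipulation that exploits $A_i=A^{g_i}$ together with the fact that $A\cdot A=A$. Write each $A_i$ as $g_i^{-1}Ag_i$ for appropriate $g_i\in G$. A direct computation yields
\[
A_1A_2\cdots A_k \;=\; g_1^{-1}\,A\,(g_1g_2^{-1})\,A\,(g_2g_3^{-1})\,A\cdots A\,(g_{k-1}g_k^{-1})\,A\,g_k.
\]
Setting $x_i:=g_ig_{i+1}^{-1}$ for $1\le i\le k-1$, and collapsing adjacent copies of $A$, the right-hand side becomes
\[
g_1^{-1}\,(Ax_1A)(Ax_2A)\cdots(Ax_{k-1}A)\,g_k.
\]
Since left and right translation by fixed elements of $G$ are bijections of $G$ carrying $G$ to itself, the equality $G=A_1A_2\cdots A_k$ is equivalent to $G=(Ax_1A)(Ax_2A)\cdots(Ax_{k-1}A)$.

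For the forward direction this already furnishes the desired $x_1,\ldots,x_{k-1}$. For the converse, given arbitrary $x_1,\ldots,x_{k-1}\in G$ with $G=(Ax_1A)\cdots(Ax_{k-1}A)$, I would back-solve the system $g_ig_{i+1}^{-1}=x_i$ by setting $g_k:=1_G$ and $g_i:=x_ix_{i+1}\cdots x_{k-1}$ for $i<k$. The subgroups $A_i:=A^{g_i}$ are then conjugates of $A$, and reading the identity above in reverse yields $G=A_1A_2\cdots A_k$.

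No serious obstacle arises; the proof is essentially a bookkeeping exercise, and the hypothesis $k\ge 3$ plays no role beyond guaranteeing that the right-hand side is a genuine product of at least two double cosets. The only point requiring mild care is checking that the telescoping choice of the $g_i$ reproduces exactly the prescribed double cosets $Ax_iA$ in the correct order, and that the flanking factors $g_1^{-1}$ and $g_k$ can be harmlessly absorbed once both sides are equated to the whole group $G$.
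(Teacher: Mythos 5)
Your proof is correct and follows essentially the same route as the paper's: both directions rest on expanding $A_i=g_i^{-1}Ag_i$, using $A\cdot A=A$ to regroup the middle into the double cosets $Ag_ig_{i+1}^{-1}A$, and absorbing the flanking translations $g_1^{-1}$ and $g_k$. Your telescoping choice $g_i=x_ix_{i+1}\cdots x_{k-1}$ for the converse is just a repackaging of the paper's step of multiplying on the right by $\left(x_1\cdots x_{k-1}\right)^{-1}$, so nothing genuinely new is gained or lost.
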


\begin{proof}
1. Suppose that $G=\left( Ax_{1}A\right) \left( Ax_{2}A\right) \cdots \left(
Ax_{k-1}A\right) $ for some $x_{1},\ldots ,x_{k-1}\in G$. Then%
\begin{align*}
G& =\left( Ax_{1}A\right) \left( Ax_{2}A\right) \cdots \left(
Ax_{k-1}A\right) =Ax_{1}Ax_{2}A\cdots Ax_{k-1}A= \\
& =AA^{x_{1}^{-1}}A^{\left( x_{1}x_{2}\right) ^{-1}}\cdots A^{\left(
x_{1}x_{2}\cdots x_{k-2}\right) ^{-1}}A^{\left( x_{1}x_{2}\cdots
x_{k-1}\right) ^{-1}}x_{1}x_{2}\cdots x_{k-1}\text{,}
\end{align*}%
and multiplying both sides on the right by $\left( x_{1}x_{2}\cdots
x_{k-1}\right) ^{-1}$, gives%
\begin{equation*}
G=AA^{x_{1}^{-1}}A^{\left( x_{1}x_{2}\right) ^{-1}}\cdots A^{\left(
x_{1}x_{2}\cdots x_{k-2}\right) ^{-1}}A^{\left( x_{1}x_{2}\cdots
x_{k-1}\right) ^{-1}}\text{.}
\end{equation*}%
2. Suppose that $G=A_{1}A_{2}\cdots A_{k}$ where each $A_{i}$ is a conjugate
of $A$ in $G$ and $k\geq 3$. Let $g_{i}\in G$, $1\leq i\leq k$ be such that $%
A_{i}=A^{g_{i}}$. Then:%
\begin{equation*}
G=A_{1}A_{2}\cdots A_{k}=\left( g_{1}^{-1}Ag_{1}\right) \left(
g_{2}^{-1}Ag_{2}\right) g_{3}^{-1}A\cdots g_{k-1}\left(
g_{k}^{-1}Ag_{k}\right) \text{.}
\end{equation*}%
Multiplying by $g_{1}$ on the left and $g_{k}^{-1}$ on the right, gives:%
\begin{align*}
G& =Ag_{1}g_{2}^{-1}Ag_{2}g_{3}^{-1}A\cdots
g_{k-1}g_{k}^{-1}A=Ag_{1}g_{2}^{-1}AAg_{2}g_{3}^{-1}AA\cdots
AAg_{k-1}g_{k}^{-1}A= \\
& =\left( Ag_{1}g_{2}^{-1}A\right) \left( Ag_{2}g_{3}^{-1}A\right) \cdots
\left( Ag_{k-1}g_{k}^{-1}A\right) \text{.}
\end{align*}
\end{proof}

\begin{proof}[Proof of Theorem \protect\ref{Th_ConjDCEquiv}]
1. First we show that (a) $\Longrightarrow $ (b) independent of the
assumption $y\in AA^{x}$: Take $k=3$ in Lemma \ref{Lem_GeneralCOnjDCrelation}%
, and in part (2) of its proof, $A_{1}=A$, $g_{1}=1$, $g_{2}=x$ and $g_{3}=y$%
. We get (b) for $z=x^{-1}$ and $w=xy^{-1}$. Now let $z,w$ be any two
elements of $G$ such that $G=\left( AzA\right) \left( AwA\right) $. Then, in
particular, $1_{G}\in \left( AzA\right) \left( AwA\right) $, from which it
follows that there exist $a_{1},a_{2}\in A$ such that $\left(
a_{1}wa_{2}\right) ^{-1}=a_{2}^{-1}w^{-1}a_{1}^{-1}\in AzA$. It follows that 
$w^{-1}\in AzA$, and so there exist $a_{3},a_{4}\in A$ such that $%
w^{-1}=a_{3}za_{4}$. Therefore, since (a) $\Longrightarrow $ (b), we get
that (a) implies $yx^{-1}=a_{3}x^{-1}a_{4}$ which is equivalent to $%
y=a_{3}x^{-1}a_{4}x\in AA^{x}$. Moreover, since (b) implies $w^{-1}\in AzA$
and hence $AwA=Az^{-1}A$, substituting $z=x^{-1}$ in (b) gives (c), so (b)
implies (c). Finally we prove (c) $\Longrightarrow $ (a) for any $y\in
AA^{x} $. Assume $G=AA^{x}A$, and let $y=a_{3}x^{-1}a_{4}x$, where $%
a_{3},a_{4}\in A $ are arbitrary. Then 
\begin{eqnarray*}
AA^{x}A^{y} &=&AA^{x}A^{a_{3}x^{-1}a_{4}x}=Ax^{-1}Ax\left(
x^{-1}a_{4}x\right) ^{-1}Ax^{-1}a_{4}x= \\
&=&Ax^{-1}AxAx^{-1}a_{4}x=AA^{x}A\left( x^{-1}a_{4}x\right) =G\text{.}
\end{eqnarray*}

2. (i) We have $O=\left( Ax^{-1}\right) A$ and therefore $AA^{x}=Ox$, and
hence, since $G$ is the union of all right cosets of $A$, the condition $%
G=\left( AA^{x}\right) A$ is equivalent to $\left( Ox\right) A=\Omega _{A}$.
The last equality is equivalent to the statement that $Ox$ intersects every
orbit of the action of $A$ on $\Omega_{A}$, since $\Omega_{A}$ is the union
of all of the orbits of $A$ on $\Omega_{A}$. Finally, if $z\in AxA$ then $%
z=a_{1}xa_{2}$, with $a_{1},a_{2}\in A$. So $Oz=\left( Ax^{-1}\right)
Aa_{1}xa_{2}=\left( Ax^{-1}\right) Axa_{2}=\left( Ox\right) a_{2}$, and $Ox$
intersects every orbit of the action of $A$ on $\Omega_{A}$ if and only if $%
\left( Ox\right) a_{2}$ does.

(ii) First note (by taking inverses) that $G=\left( AxA\right)
^{2}\Longleftrightarrow G=\left( Ax^{-1}A\right) ^{2}$. Now take $k=3$ and $%
x_{1}=x_{2}=x^{-1}$ in the proof of Lemma \ref{Lem_GeneralCOnjDCrelation}
part 1. Thus $G=\left( AxA\right) ^{2}$ implies $G=AA^{x}A^{x^{2}}$.
Conversely, if $G=AA^{x}A^{x^{2}}$, we take $k=3$, $g_{1}=1$, $g_{2}=x$ and $%
g_{3}=x^{2}$ in the proof part 2 of Lemma \ref{Lem_GeneralCOnjDCrelation}\
and use the above mentioned equivalence to get\ $G=\left( AxA\right) ^{2}$.
\end{proof}

\bigskip

Let $G$ be a group and $A<G$. We discuss two possible algebraic frameworks
to deal with products of double cosets of $A$. Fix a set $J\subseteq G$ of
representatives of distinct double cosets of $A$ in $G$.

\subsection{The Hecke algebra of double cosets}

For the first algebraic framework, it is sufficient, for our purpose, to
assume that $G$ is finite. Let $\mathbb{Q}$ be the field of rational numbers
and $\mathbb{Q}\left[ G\right] $ the group algebra of $G$ over $\mathbb{Q}$.
For any subset $S\subseteq G$ define $\underline{S}\in \mathbb{Q}\left[ G%
\right] $ by $\underline{S}:=\tsum\limits_{g\in S}$ $g$. The set $\left\{
e_{j}:=\frac{1}{\left\vert A\right\vert }\underline{AjA}|j\in J\right\} $ is
linearly independent in $\mathbb{Q}\left[ G\right] $, and its elements
satisfy the product rule $e_{x}e_{y}=\tsum\limits_{j\in J}a_{xyj}e_{j}$,
where the structure constants $a_{xyj}$ (also called \emph{intersection
numbers}) are nonnegative integers. The span of the set $\left\{ e_{j}:=%
\frac{1}{\left\vert A\right\vert }\underline{AjA}|j\in J\right\} $ in $%
\mathbb{Q}\left[ G\right] $\ is the Hecke algebra of the double cosets of $A$
- see \cite{CurtisReiner} Chapter 1, Section 11D. The advantage in
representing the double cosets as group algebra sums lies in its relation to
the action of $G$ on $\Omega _{A}$. This relation gives a highly non-trivial
algorithm to compute the $a_{xyj}$ from the permutation character $1_{A}^{G}$
associated with the action of $G$ on $\Omega _{A}$, viewed as a complex
character - see \cite{Cameron1} Chapters 2 and 3 for some general background
and \cite{BreuerLux}, \cite{Muller} for the details of the specific
algorithm we later use in Section \ref{Sect_gamma=3}. This algorithm
requires the additional assumption that the character $1_{A}^{G}$ is
multiplicity free. Since we are assuming that $G$ is finite, the number $%
r:=\left\vert J\right\vert $ of distinct double cosets of $A$ is a natural
number that is equal to the rank of the permutation representation of $G$ on 
$\Omega _{A}$ (see Exercise 3.2.27 of \cite{DixonUndMortimer}). It is
customary to view the $a_{xyj}$ as defining $r$ square nonnegative integer $%
r\times r$ matrices called the \emph{collapsed adjacency matrices}, via $%
\left( P_{y}\right) _{xj}=a_{xyj}$. Typically, the rank of the given
permutation representation is much smaller than its degree $\left\vert
G:A\right\vert $, and this explains why the above mentioned algorithm is
capable of computing the collapsed adjacency matrices even for the larger
sporadic groups and some of their subgroups. Note that for a given $G$ and $%
A $ and $x,y\in J$, the condition $G=\left( AxA\right) \left( AyA\right) $
is equivalent to the condition $\left( P_{y}\right) _{xj}\neq 0$ for all $%
j\in J $, namely, that the row labeled by $x$ in the matrix labeled by $y$
consists entirely of non-zero entries.

\subsection{The dioid of double cosets}

Our second approach to products of double cosets of $A$ in $G$ (here $G$ can
be any group, not necessarily finite) utilizes the following algebraic
structure.

\begin{definition}
\label{Def_semiring}A quintuple $\left( R,+,\cdot,0,1\right) $ where $R$ is
a set and $+$ and $\cdot$ are two binary operations over $R$, called
respectively addition and multiplication, is a \emph{semiring} if the
following axioms are satisfied:

(a) $\left( R,+,0\right) $ is a commutative monoid with an identity element $%
0$.

(b) $\left( R,\cdot,1\right) $ is a monoid with an identity element $1$.

(c) Multiplication is right and left distributive over addition.

(d) Multiplication by $0$ annihilates $R$, that is $0\cdot a=a\cdot0=0$ for
all $a\in R$.

A \emph{starred semiring} is a semiring equipped with an additional unary
operation denoted by $\ast $ (no axioms are imposed in the general case). A
semiring in which addition is idempotent, namely, $a+a=a$ for all $a\in R$,
is called an \emph{idempotent semiring} or a \emph{dioid} (\cite{Gunawardena}%
). Note that every dioid is equipped with a "ready made" partial order
relation defined by $a\leq b$ if and only if $a+b=b$. A \emph{complete
semiring} is a semiring with an infinitary sum operation, namely, $%
\tsum\limits_{i\in I}a_{i}$ is defined for any indexing set $I$ with $%
a_{i}\in R$ for every $i\in I$. An infinitary sum reduces to an ordinary
finite sum when $I$ is finite and behaves in a natural way with respect to
partitions of $I$. Moreover, it is required to satisfy the left and right
distributive laws: $a\cdot \left( \tsum\limits_{i\in I}a_{i}\right)
=\tsum\limits_{i\in I}\left( a\cdot a_{i}\right) $ and $\left(
\tsum\limits_{i\in I}a_{i}\right) \cdot a=\tsum\limits_{i\in I}\left(
a_{i}\cdot a\right) $ for any $a\in R$.
\end{definition}

One can easily check that the set $D$ of all unions of double cosets of $A$
in $G$ together with the empty set is a starred, idempotent, complete
semiring under the following: addition is the operation of set union,
multiplication is the setwise product, $0$ is the empty set (we define the
product of the empty set with any subset of $G$ to be the empty set), $1$ is
the trivial double coset $A1_{G}A=A$ and the star operation maps each
non-empty $S\in D$ to its inverse $S^{-1}:=\left\{ s^{-1}|s\in S\right\} $
and $\emptyset $ to $\emptyset $. We shall call this structure the \emph{DC
dioid} associated with $G$ and $A$. Note that the natural partial order
relation defined above for a general dioid amounts in the case of the DC
dioid to set inclusion. Also note that the star operation is involutive,
that it is an isomorphism of the additive substructure and an
anti-isomorphism of the multiplicative substructure, and that $aa^{\ast
}=1+\ldots $ for all $\emptyset \neq a\in D$. In Section \ref{SECT_BN}, when
computing in the DC dioid, we will use $\cup $, $\subseteq $ and $^{-1}$
for, respectively, $+$, $\leq $ and $^{\ast }$.

We set $d_{j}:=$ $AjA$, for any $j\in J$. Any element of the DC dioid can
then be written as an infinitary sum $\tsum\limits_{j\in J}c_{j}d_{j}$,
where for each $j\in J$, $c_{j}$ is either $0$ or $1$ ($0,1\in D$). In
particular $G=\tsum\limits_{j\in J}d_{j}$, and the product of any two double
cosets defines the structure constants $c_{xyj}\in \left\{ 0,1\right\}
\subseteq D$ via $d_{x}d_{y}=\tsum\limits_{j\in J}c_{xyj}d_{j}$. Hence, in
this framework, the existence of two double cosets of $G$ whose product
equals $G$ is equivalent to the existence of $x,y\in J$ such that $%
d_{x}d_{y}=G$, which is equivalent to $c_{xyj}=1$ for all $j\in J$.

Finally observe the easy connection between the two approaches (in case of $%
G $ finite): $c_{xyj}=1$ if and only if $a_{xyj}\neq 0$.

\section{Groups with a $BN$-pair\label{SECT_BN}}

Throughout this section $G$ is a fixed group with a fixed $BN$-pair and the
associated Weyl group $W$. Recall that $W$ is generated by a set of
involutions $\left\{ s_{i}|i\in I\right\} $. We work in the DC dioid defined
by the double cosets of $B$ in $G$ (see Section \ref{Sect_DC}). By
Proposition 2.1.2 of \cite{Carter2} there is a bijective map between the set
of all double cosets of $B$ in $G$ and the set of all elements of $W$. Thus
we can choose the set $J$ (see Section \ref{Sect_DC}) of distinct $B$-double
cosets representatives as a subset of $N$, in the form $J=\left\{ n_{w}\in
N|n_{w}H=w,w\in W\right\} $, where $H=B\cap N$ and $\left\vert J\right\vert
=\left\vert W\right\vert $. The specific choices of the $n_{w}$ do not
matter since the double cosets of $B$ are uniquely labeled by the Weyl group
elements. Hence, to simplify the notation, we write $d_{w}$ for the double
coset $d_{n_{w}}:=Bn_{w}B$. Observe that $d_{w}^{-1}=d_{w^{-1}}$ for every $%
w\in W$ since $\left( Bn_{w}B\right) ^{-1}=Bn_{w}^{-1}B=Bn_{w^{-1}}B$. The
following lemma is a basic tool for computing products in the DC dioid,
given these settings.

\begin{lemma}
\label{Lem_dwds}For all $w\in W$, and $i\in I$ we have:%
\begin{equation}
d_{w}d_{s_{i}}=\left\{ 
\begin{array}{c}
\text{ }d_{ws_{i}}\text{\ \ \ \ \ \ \ if }l\left( ws_{i}\right) =l\left(
w\right) +1 \\ 
d_{ws_{i}}\cup d_{w}\text{ if }l\left( ws_{i}\right) =l\left( w\right) -1%
\text{,}%
\end{array}%
\right.  \label{Eq_dwds}
\end{equation}%
\begin{equation}
d_{s_{i}}d_{w}=\left\{ 
\begin{array}{c}
d_{s_{i}w}\text{ \ \ \ \ \ \ \ if }l\left( s_{i}w\right) =l\left( w\right) +1
\\ 
d_{s_{i}w}\cup d_{w}\text{ if }l\left( s_{i}w\right) =l\left( w\right) -1%
\text{.}%
\end{array}%
\right.  \label{Eq_dsdw}
\end{equation}
\end{lemma}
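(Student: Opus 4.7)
The plan is to prove \eqref{Eq_dwds} directly from the $BN$-pair axioms; \eqref{Eq_dsdw} then follows formally by the star involution on the DC dioid. Indeed, $^{-1}$ is an anti-isomorphism of multiplication with $d_w^{-1}=d_{w^{-1}}$, and since $l(w^{-1})=l(w)$ and $l((s_iw)^{-1})=l(w^{-1}s_i)$, applying star to \eqref{Eq_dwds} at $w^{-1}$ returns \eqref{Eq_dsdw} at $w$.

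First I would establish universal containments. Rewriting axiom (v) by inversion (permissible since $s_i^2=1$, so $n_i^{-1}$ is again a lift of $s_i$) gives $nBn_i\subseteq Bnn_iB\cup BnB$ for every $n\in N$; wrapping by $B$ yields the upper bound $d_wd_{s_i}\subseteq d_{ws_i}\cup d_w$. The reverse inclusion $d_{ws_i}\subseteq d_wd_{s_i}$ is immediate from $n_wn_i\in d_wd_{s_i}$. So the formula reduces to deciding whether $d_w$ is absorbed into $d_wd_{s_i}$ or not. The key auxiliary is the identity $d_{s_i}^{2}=d_{1}\cup d_{s_i}$: axiom (v) with $n=n_i$ together with $n_i^2\in H\subseteq B$ (from $s_i^2=1$) gives the $\subseteq$ inclusion; $d_{s_i}=d_{s_i}^{-1}$ (from $s_i^{-1}=s_i$) yields $1\in d_{s_i}d_{s_i}^{-1}=d_{s_i}^{2}$, so $d_1\subseteq d_{s_i}^{2}$; and axiom (iv), $n_iBn_i\ne B$, produces an element of $d_{s_i}^{2}$ outside $d_1$, forcing $d_{s_i}\subseteq d_{s_i}^{2}$.

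The main argument is strong induction on $l(w)$, with base $l(w)=0$ trivial. For $l(w)=k\ge 1$ in the length-decreasing case $l(ws_i)=k-1$, set $w'':=ws_i$, so $l(w'')=k-1$ and $l(w''s_i)=k$; the inductive (length-increasing) hypothesis at $w''$ gives $d_{w''}d_{s_i}=d_w$, and therefore
\[
d_w d_{s_i} \;=\; d_{w''}\,d_{s_i}^{2} \;=\; d_{w''}\bigl(d_1\cup d_{s_i}\bigr) \;=\; d_{ws_i}\cup d_w,
\]
as required.

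The main obstacle is the length-increasing case $l(ws_i)=k+1$, where we must rule out $d_w\subseteq d_wd_{s_i}$. Here I would run a joint induction on $l(w)$ for both \eqref{Eq_dwds} and \eqref{Eq_dsdw}: factor $w=s_jw'$ with $l(w')=k-1$ from a reduced expression, so the inductive \eqref{Eq_dsdw} gives $d_w=d_{s_j}d_{w'}$; the $\pm 1$ behavior of the Coxeter length function, $|l(s_j\cdot w's_i)-l(w's_i)|\le 1$, combined with $l(s_j\cdot w's_i)=l(ws_i)=k+1$, forces $l(w's_i)=l(w')+1=k$, and the inductive \eqref{Eq_dwds} gives $d_{w'}d_{s_i}=d_{w's_i}$. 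The product collapses to $d_wd_{s_i}=d_{s_j}d_{w's_i}$, and the remaining step appeals to axiom (v) and the disjointness of double cosets labeled by distinct Weyl group elements to isolate the single summand $d_{s_j\cdot w's_i}=d_{ws_i}$. The delicate point — and the heart of the proof — is arranging the simultaneous induction so that \eqref{Eq_dwds} and \eqref{Eq_dsdw} at length $k$ are established in tandem, since the final collapse $d_{s_j}d_{w's_i}=d_{ws_i}$ is itself an instance of \eqref{Eq_dsdw} at $w's_i$ of length $k$; the non-circular passage must be mediated by the exchange-type consequences of the $BN$-pair axioms rather than by the formula at that length itself.
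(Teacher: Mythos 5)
Your handling of the length-decreasing branch is correct and is essentially the paper's argument: establish $d_{s_i}^{2}=d_{1}\cup d_{s_i}$ from axioms (iv), (v) and $n_i^2\in B$, then write $d_w=d_{ws_i}d_{s_i}$ using the length-increasing case at the shorter element $ws_i$ and multiply by $d_{s_i}$ to absorb the extra term. The passage between \eqref{Eq_dwds} and \eqref{Eq_dsdw} via the star involution is also fine. The problem is the length-increasing case, which is where all the real content of the Bruhat multiplication rule sits. Your reduction there is circular, and you say so yourself: writing $w=s_jw'$ with $l(w')=k-1$, you correctly get $d_wd_{s_i}=d_{s_j}d_{w's_i}$ with $l(w's_i)=k$ and $l(s_jw's_i)=k+1$, but collapsing $d_{s_j}d_{w's_i}$ to the single double coset $d_{ws_i}$ is exactly the length-increasing instance of \eqref{Eq_dsdw} at an element of length $k$ --- the very length your induction is currently trying to establish. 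The induction on $l(w)$ therefore does not descend; the statement at length $k$ has been reduced to the statement at length $k$. Declaring that the step ``must be mediated by exchange-type consequences of the $BN$-pair axioms'' names the difficulty without resolving it.

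Concretely, what is missing is a proof that $Bn_wB\cap Bn_wBn_iB=\emptyset$ whenever $l(ws_i)=l(w)+1$ (equivalently, that $d_w$ is \emph{not} absorbed, given your correct upper bound $d_wd_{s_i}\subseteq d_{ws_i}\cup d_w$). The paper does not reprove this: it cites Proposition 2.1.3(ii) of \cite{Carter2} for the length-increasing branch and only supplies original argument for the decreasing one. If you want a self-contained proof, the standard route is a different induction: assume $n_w\in Bn_wBn_iB$, deduce $Bn_iB\subseteq Bn_{w^{-1}}Bn_wB$, and derive a contradiction with $l(ws_i)>l(w)$ by peeling generators off a reduced word for $w$; your current factorization $w=s_jw'$ does not shorten the element at which the increasing case is needed. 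As written, the proposal proves the easy half of the lemma and leaves the hard half open.
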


\begin{proof}
Note that (\ref{Eq_dwds}) is obtained from (\ref{Eq_dsdw}) by taking the
inverse of both sides of (\ref{Eq_dsdw}), and vice versa. Hence it suffices
to prove (\ref{Eq_dsdw}). If $l\left( s_{i}w\right) =l\left( w\right) +1$
the claim follows from \cite{Carter2}, Proposition 2.1.3 (ii).

Now suppose that $l\left( s_{i}w\right) =l\left( w\right) -1$. First we
prove the claim for the case $w=s_{i}$. Denote $n_{s_{i}}$ by $n_{i}$. We
have $d_{s_{i}}^{2}=Bn_{i}Bn_{i}B$, and so we have to show that $%
Bn_{i}Bn_{i}B=B\cup Bn_{i}B$. The l.h.s is contained in the r.h.s, by axiom
(v) of the definition of a group with a $BN$-pair (see Section \ref%
{Sect_Intro}). To prove the reverse inclusion, observe that since $%
n_{i}^{2}\in B$ we get $B\subseteq Bn_{i}Bn_{i}B$. On the other hand, by the
definition of a group with a $BN$-pair, $n_{i}Bn_{i}\neq B$ so $%
Bn_{i}Bn_{i}B\neq B$. Since $Bn_{i}Bn_{i}B\subseteq B\cup Bn_{i}B$, this
implies $Bn_{i}B\subseteq Bn_{i}Bn_{i}B$, and altogether $%
Bn_{i}Bn_{i}B=B\cup Bn_{i}B$, which is equivalent to $%
d_{s_{i}}d_{s_{i}}=d_{1}\cup d_{s_{i}}$.

Now consider a general $w\in W$ of positive length. Using $s_{i}^{2}=1$ and $%
l\left( s_{i}w\right) =l\left( w\right) -1$, we get $l\left( w\right)
=l\left( s_{i}\left( s_{i}w\right) \right) =l\left( s_{i}w\right) +1$ and by
the upper branch of the identity, which is already accounted for, $%
d_{s_{i}}d_{s_{i}w}=d_{s_{i}^{2}w}=d_{w}$. Multiply the last relation by $%
d_{s_{i}}$ on the left. We get:%
\begin{equation*}
d_{s_{i}}d_{w}=\left( d_{s_{i}}\right) ^{2}d_{s_{i}w}=\left( d_{1}\cup
d_{s_{i}}\right) d_{s_{i}w}=d_{s_{i}w}\cup
d_{s_{i}}d_{s_{i}w}=d_{s_{i}w}\cup d_{w}\text{.}
\end{equation*}
\end{proof}

\bigskip

An expression for $w\in W$ as a product of $l\left( w\right) $ generators $%
s_{i}$ is called a reduced word for $w$. Since reduced words are, in
general, not unique, we introduce the following formalism in order to make
our arguments precise. Let $\mathcal{E}_{I}$ be the free monoid generated by
the letters $\left\{ \varepsilon _{i}|i\in I\right\} $. Let $\varepsilon
_{0} $ be the identity of $\mathcal{E}_{I}$, and define a length function $l:%
\mathcal{E}_{I}\rightarrow \mathbb{N}_{0}$ by $l\left( \varepsilon
_{0}\right) =0$, $l\left( \varepsilon _{i}\right) =1$ for all $i\in I$, and
recursively $l\left( \varepsilon _{i_{1}}\varepsilon _{i_{2}}\cdots
\varepsilon _{i_{m}}\right) =l\left( \varepsilon _{i_{1}}\varepsilon
_{i_{2}}\cdots \varepsilon _{i_{m-1}}\right) +l\left( \varepsilon
_{i_{m}}\right) $ for all $m\geq 2$, where $i_{1},\ldots ,i_{m}\in I\cup
\left\{ 0\right\} $ (assume $0\notin I$). We denote both length functions on 
$\mathcal{E}_{I}$ and on $W$ by the same letter $l$. Let $\tau :\mathcal{E}%
_{I}\rightarrow W$ be the monoid homomorphism defined by $\tau \left(
\varepsilon _{0}\right) =1_{W}$ and $\tau \left( \varepsilon _{i}\right)
=s_{i}$ for all $i\in I$. We will call $a=\varepsilon _{i_{1}}\varepsilon
_{i_{2}}\cdots \varepsilon _{i_{m}}\in \mathcal{E}_{I}$ reduced if $l\left(
a\right) =l\left( \tau \left( a\right) \right) $. For each $w\in W$ define:%
\begin{equation*}
Red\left( w\right) :=\left\{ a\in \tau ^{-1}\left( w\right) |a\text{ is
reduced}\right\} \text{,}
\end{equation*}%
namely, $Red\left( w\right) $ is the set of all reduced expressions for $w$.
Clearly $Red\left( w\right) \neq \emptyset $ for any $w\in W$. Finally,
recall that if $W$ is finite then $w_{0}$ denotes the unique element of
maximal length in $W$.

\begin{lemma}
\label{Lem_d_t(xy)=d_t(x)d_t(y)}Let $x,y\in\mathcal{E}_{I}$ be such that $x$%
, $y$ and $xy$ are reduced. Then $d_{\tau\left( xy\right) }=d_{\tau\left(
x\right) }d_{\tau\left( y\right) }$.
\end{lemma}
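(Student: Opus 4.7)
The plan is to induct on $l(y)$, peeling off one letter from the right of $y$ and applying Lemma \ref{Lem_dwds} at each step. The base case $l(y)=0$ is immediate: then $y=\varepsilon_0$, so $\tau(y)=1_W$, $d_{\tau(y)}=d_{1_W}=B$ is the identity of the DC dioid, and $\tau(xy)=\tau(x)$, giving the desired equality trivially.

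For the inductive step, write $y=y'\varepsilon_i$ with $l(y')=l(y)-1$. The first thing I would verify is a little lemma: any prefix of a reduced word in $\mathcal{E}_I$ is reduced. Indeed, if $y=y'\varepsilon_i$ is reduced, then
\[
l(y)=l(\tau(y))=l(\tau(y')s_i)\leq l(\tau(y'))+1\leq l(y')+1=l(y),
\]
forcing equality throughout, whence $l(y')=l(\tau(y'))$ so that $y'$ is reduced and, as a bonus, $l(\tau(y')s_i)=l(\tau(y'))+1$. Applying the same observation to $xy=(xy')\varepsilon_i$ shows that $xy'$ is reduced and $l(\tau(xy')s_i)=l(\tau(xy'))+1$.

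Now I would combine these facts with Lemma \ref{Lem_dwds}. By the upper branch of \eqref{Eq_dwds}, the two length identities just derived yield
\[
d_{\tau(y)}=d_{\tau(y')s_i}=d_{\tau(y')}d_{s_i}\qquad\text{and}\qquad d_{\tau(xy)}=d_{\tau(xy')s_i}=d_{\tau(xy')}d_{s_i}.
\]
Since $x$, $y'$ and $xy'$ are all reduced and $l(y')<l(y)$, the induction hypothesis gives $d_{\tau(xy')}=d_{\tau(x)}d_{\tau(y')}$. Substituting and using associativity of multiplication in the DC dioid,
\[
d_{\tau(xy)}=d_{\tau(xy')}d_{s_i}=d_{\tau(x)}d_{\tau(y')}d_{s_i}=d_{\tau(x)}d_{\tau(y)},
\]
which completes the induction.

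There is no real obstacle here; the only point that requires care is the prefix/suffix-reducedness observation, and once that is in place Lemma \ref{Lem_dwds} does all the work. The argument also makes transparent why the hypothesis that $xy$ be reduced (and not merely that $x$ and $y$ are) is essential: it is exactly what ensures that the upper (rather than the lower) branch of \eqref{Eq_dwds} applies at each step, so that no extra double coset $d_w$ appears in the product.
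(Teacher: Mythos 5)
Your proof is correct and follows essentially the same strategy as the paper's: induction on $l(y)$ driven by the upper branch of Lemma \ref{Lem_dwds}, together with the (correctly justified) observation that prefixes of reduced words are reduced. The only difference is cosmetic: you peel the last letter off $y$ and keep $x$ fixed throughout the induction, whereas the paper peels the first letter off $y$ and absorbs it into $x$, which forces it to treat $l(y)=1$ as a separate case; your version is marginally cleaner.
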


\begin{proof}
By induction on $l\left( y\right) $. For $l\left( y\right) =0$ we have $%
y=\varepsilon_{0}$ so $xy=x\varepsilon_{0}=x$ and $\tau\left( y\right)
=\tau\left( \varepsilon_{0}\right) =1_{W}$. In this case $d_{\tau\left(
xy\right) }=d_{\tau\left( x\right) }$ and $d_{\tau\left( x\right)
}d_{\tau\left( y\right) }=d_{\tau\left( x\right) }B=d_{\tau\left( x\right) }$
so the claim holds. If $l\left( y\right) =1$ then $y=\varepsilon_{i}$ with $%
i\in I$. We have $d_{\tau\left( x\right) }d_{\tau\left( y\right)
}=d_{\tau\left( x\right) }d_{s_{i}}$ and since $xy$ is reduced, $l\left(
xy\right) =$ $l\left( \tau\left( xy\right) \right) =l\left( \tau\left(
x\right) s_{i}\right) $. Since $x$ is reduced, $l\left( xy\right) =l\left(
x\varepsilon_{i}\right) =l\left( x\right) +1=l\left( \tau\left( x\right)
\right) +1$, and so we proved $l\left( \tau\left( x\right) s_{i}\right) =$ $%
l\left( \tau\left( x\right) \right) +1$. By Lemma \ref{Lem_dwds} we get $%
d_{\tau\left( x\right) }d_{s_{i}}=d_{\tau\left( x\right)
s_{i}}=d_{\tau\left( xy\right) }$.

Now suppose that $l\left( y\right) =m>1$. Thus $y=\varepsilon
_{i_{1}}\varepsilon _{i_{2}}\cdots \varepsilon _{i_{m}}$ for some $%
i_{1},\ldots ,i_{m}\in I$. Since $y$ is reduced, $l\left( y\right) =l\left(
\tau \left( y\right) \right) $, and hence $y=\varepsilon _{i_{1}}y^{\prime }$%
, and $l\left( y^{\prime }\right) =l\left( \tau \left( y^{\prime }\right)
\right) =m-1$, so $y^{\prime }\in \mathcal{E}_{I}-\left\{ \varepsilon
_{0}\right\} $ is reduced of length $l\left( y^{\prime }\right) =l\left(
y\right) -1$. By Lemma \ref{Lem_dwds} we have $d_{\tau \left( y\right)
}=d_{s_{i_{1}}}d_{\tau \left( y^{\prime }\right) }$. By assumption, $%
xy=x\varepsilon _{i_{1}}y^{\prime }$ is also reduced. Therefore $l\left(
\tau \left( x\varepsilon _{i_{1}}\right) \right) =l\left( x\right) +1$,
because, otherwise, $l\left( \tau \left( x\varepsilon _{i_{1}}\right)
\right) <l\left( x\right) +1$ and so $l\left( \tau \left( xy\right) \right)
=l\left( \tau \left( x\varepsilon _{i_{1}}\right) \tau \left( y^{\prime
}\right) \right) <l\left( x\right) +1+l\left( y\right) -1=l\left( x\right)
+l\left( y\right) =l\left( xy\right) $ contradicting the fact the $xy$ is
reduced. Now we can use the induction assumption with $x\varepsilon _{i_{1}}$
in the role of $x$ and $y^{\prime }$ in the role of $y$. We get $d_{\tau
\left( xy\right) }=d_{\tau \left( x\varepsilon _{i_{1}}y^{\prime }\right) }=$
$d_{\tau \left( x\varepsilon _{i_{1}}\right) }d_{\tau \left( y^{\prime
}\right) }$. Using twice the length $1$ case (it works for both orderings)
gives $d_{\tau \left( x\varepsilon _{i_{1}}\right) }d_{\tau \left( y^{\prime
}\right) }=d_{\tau \left( x\right) }d_{s_{i_{1}}}d_{\tau \left( y^{\prime
}\right) }=d_{\tau \left( x\right) }d_{\tau \left( \varepsilon
_{i_{1}}y^{\prime }\right) }=$ $d_{\tau \left( x\right) }d_{\tau \left(
y\right) }$.
\end{proof}

\begin{lemma}
\label{Lem_d_t(w0)ContainedInd_t(b)d_t(w0)} Assume that $W$ is finite. Let $%
b\in \mathcal{E}_{I}$. Then $d_{w_{0}}\subseteq d_{\tau \left( b\right)
}d_{w_{0}}$.
\end{lemma}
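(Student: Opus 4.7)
The plan is to reduce to the case where $b$ is a reduced word and then induct on its length, using Lemma \ref{Lem_dwds} applied to $s_i w_0$ as the engine. Observe first that the statement depends only on $\tau(b) \in W$, since $d_{\tau(b)}$ depends only on the element of $W$, not on the word representing it. So we may replace $b$ with any element of $Red(\tau(b))$, and then induct on $l(b) = l(\tau(b))$.

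For the base case $l(b)=0$, we have $\tau(b)=1_W$ and $d_{\tau(b)}=B$ is the multiplicative identity of the DC dioid, giving $d_{\tau(b)}d_{w_0}=d_{w_0}$. For the inductive step, peel off the first generator: write $b=\varepsilon_{i_1} b'$ with $b'$ reduced of length $l(b)-1$. Then by Lemma \ref{Lem_d_t(xy)=d_t(x)d_t(y)} we have $d_{\tau(b)}=d_{s_{i_1}}d_{\tau(b')}$. The induction hypothesis applied to $b'$ gives $d_{w_0}\subseteq d_{\tau(b')}d_{w_0}$, and since left multiplication by $d_{s_{i_1}}$ is monotone with respect to $\subseteq$ (a direct consequence of distributivity in the dioid), we deduce $d_{s_{i_1}}d_{w_0}\subseteq d_{s_{i_1}}d_{\tau(b')}d_{w_0}=d_{\tau(b)}d_{w_0}$.

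The heart of the argument is the maximality of $w_0$: since $w_0$ has maximal length in $W$, no generator can increase it, so $l(s_{i_1} w_0)=l(w_0)-1$. The lower branch of (\ref{Eq_dsdw}) in Lemma \ref{Lem_dwds} then yields $d_{s_{i_1}} d_{w_0} = d_{s_{i_1} w_0} \cup d_{w_0}$, which already contains $d_{w_0}$. Chaining the two inclusions closes the induction: $d_{w_0}\subseteq d_{s_{i_1}} d_{w_0} \subseteq d_{\tau(b)} d_{w_0}$.

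There is no serious obstacle in this proof. The only point requiring attention is that Lemma \ref{Lem_d_t(xy)=d_t(x)d_t(y)} applies only to reduced expressions, which is exactly why the initial reduction to $b\in Red(\tau(b))$ is needed; once that is done, the argument is driven entirely by the length-dropping property of $w_0$ together with formal monotonicity of multiplication in the DC dioid.
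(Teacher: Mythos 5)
Your proof is correct and follows essentially the same route as the paper's: induction on $l(\tau(b))$, reduction to a reduced word, the factorization $d_{\tau(b)}=d_{s_{i}}d_{\tau(b')}$ via Lemma \ref{Lem_d_t(xy)=d_t(x)d_t(y)}, and the inclusion $d_{w_{0}}\subseteq d_{s_{i}}d_{w_{0}}$ coming from maximality of $l(w_{0})$ and the lower branch of (\ref{Eq_dsdw}). The only (immaterial) difference is that you peel off the first letter of the reduced word while the paper peels off the last.
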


\begin{proof}
By induction on $l\left( \tau \left( b\right) \right) $. If $l\left( \tau
\left( b\right) \right) =0$ then $d_{\tau \left( b\right) }$ is the
multiplicative identity of the DC dioid and the claim is clear. If $l\left(
\tau \left( b\right) \right) >0$ choose a reduced expression $\varepsilon
_{i_{1}}\varepsilon _{i_{2}}\cdots \varepsilon _{i_{m}}\in Red\left(
b\right) $, and set $b^{\prime }=\varepsilon _{i_{1}}\varepsilon
_{i_{2}}\cdots \varepsilon _{i_{m-1}}$. By Lemma \ref%
{Lem_d_t(xy)=d_t(x)d_t(y)} we have $d_{\tau \left( b\right) }=d_{\tau \left(
b^{\prime }\right) }d_{\tau \left( \varepsilon _{i_{m}}\right) }=d_{\tau
\left( b^{\prime }\right) }d_{s_{i_{m}}}$. Hence $d_{\tau \left( b\right)
}d_{w_{0}}=d_{\tau \left( b^{\prime }\right) }d_{s_{i_{m}}}d_{w_{0}}$. By
maximality of $l\left( w_{0}\right) $, we have $l\left(
s_{i_{m}}w_{0}\right) <l\left( w_{0}\right) $, hence, by Lemma \ref{Lem_dwds}
(\ref{Eq_dsdw}), $d_{w_{0}}\subseteq d_{s_{i_{m}}}d_{w_{0}}$ and so $d_{\tau
\left( b^{\prime }\right) }d_{w_{0}}\subseteq d_{\tau \left( b^{\prime
}\right) }d_{s_{i_{m}}}d_{w_{0}}=d_{\tau \left( b\right) }d_{w_{0}}$. Since $%
l\left( \tau \left( b^{\prime }\right) \right) =m-1<m=l\left( \tau \left(
b\right) \right) $ we have by induction $d_{w_{0}}\subseteq d_{\tau \left(
b^{\prime }\right) }d_{w_{0}}$ and the claim follows.
\end{proof}

\begin{lemma}
\label{Lem_d_t(ab)ContainedInd_t(a)d_t(b)}Let $a,b\in\mathcal{E}_{I}$. Then $%
d_{\tau\left( ab\right) }\subseteq d_{\tau\left( a\right) }d_{\tau\left(
b\right) }$.
\end{lemma}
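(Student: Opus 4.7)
The identity $d_{\tau\left( ab\right)}\subseteq d_{\tau\left( a\right)}d_{\tau\left( b\right)}$ really reduces to the following statement inside $W$: for any $w_{1},w_{2}\in W$ one has $d_{w_{1}w_{2}}\subseteq d_{w_{1}}d_{w_{2}}$. Only containment is required (unlike Lemma \ref{Lem_d_t(xy)=d_t(x)d_t(y)}) and no reducedness hypothesis is available, so I would not try to mimic the inductive argument of that lemma via Lemma \ref{Lem_dwds}. Instead I would exploit the explicit description $d_{w}=BnB$ for any $n\in N$ with $nH=w$, together with the fact, already stressed in the excerpt, that this double coset is independent of the chosen preimage of $w$ in $N$.

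Concretely, the plan is as follows. Choose any $n_{a},n_{b}\in N$ with $n_{a}H=\tau\left( a\right)$ and $n_{b}H=\tau\left( b\right)$. Since $H\trianglelefteq N$, the product $n_{a}n_{b}$ lies in $N$ and satisfies $\left( n_{a}n_{b}\right) H=\tau\left( a\right)\tau\left( b\right)=\tau\left( ab\right)$, so $n_{a}n_{b}$ is a legitimate preimage of $\tau\left( ab\right)$. By the preimage-independence just recalled, $d_{\tau\left( ab\right)}=Bn_{a}n_{b}B$. Inserting $1_{G}\in B$ between $n_{a}$ and $n_{b}$ then yields
\begin{equation*}
d_{\tau\left( ab\right)}=Bn_{a}n_{b}B\subseteq Bn_{a}Bn_{b}B=\left( Bn_{a}B\right)\left( Bn_{b}B\right)=d_{\tau\left( a\right)}d_{\tau\left( b\right)}\text{,}
\end{equation*}
as desired. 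This is essentially a one-line computation once the right preimage of $\tau\left( ab\right)$ is chosen.

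Before settling on this direct argument I considered the natural induction on $l\left( b\right)$: write $b=b^{\prime}\varepsilon_{i_{m}}$, apply the inductive hypothesis to $\left( a,b^{\prime}\right)$, and right-multiply by $d_{s_{i_{m}}}$, obtaining, via Lemma \ref{Lem_dwds} applied to $\tau\left( ab\right)=\tau\left( ab^{\prime}\right) s_{i_{m}}$, that $d_{\tau\left( ab\right)}\subseteq d_{\tau\left( a\right)}d_{\tau\left( b^{\prime}\right)}d_{s_{i_{m}}}$. In the length-increase branch of Lemma \ref{Lem_dwds} one has $d_{\tau\left( b^{\prime}\right)}d_{s_{i_{m}}}=d_{\tau\left( b\right)}$ and the induction closes, but in the length-decrease branch an extra summand $d_{\tau\left( b^{\prime}\right)}$ appears that cannot be absorbed into $d_{\tau\left( b\right)}$. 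This is the only real obstacle to the inductive strategy, and it is precisely what the preimage argument above bypasses: that argument works directly in $N$, where the multiplication respects the quotient map to $W$ with no length-dependent branching.
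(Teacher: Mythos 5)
Your proof is correct and is essentially the paper's own argument: both identify $n_a n_b$ as a valid preimage of $\tau(ab)=\tau(a)\tau(b)$, use $H\subseteq B$ to conclude $d_{\tau(ab)}=Bn_an_bB$, and insert $1_G\in B$ to obtain $Bn_an_bB\subseteq Bn_aBn_bB=d_{\tau(a)}d_{\tau(b)}$. Your closing remark correctly diagnoses why the naive induction via Lemma \ref{Lem_dwds} does not close, which is indeed why the paper also argues directly in $N$.
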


\begin{proof}
We have $d_{\tau \left( a\right) }d_{\tau \left( b\right) }=\left( Bn_{\tau
\left( a\right) }B\right) \left( Bn_{\tau \left( b\right) }B\right)
=Bn_{\tau \left( a\right) }Bn_{\tau \left( b\right) }B$. Since $1_{G}\in B$
we get $n_{\tau \left( a\right) }n_{\tau \left( b\right) }\in n_{\tau \left(
a\right) }Bn_{\tau \left( b\right) }$. Hence $n_{\tau \left( a\right)
}n_{\tau \left( b\right) }\in d_{\tau \left( a\right) }d_{\tau \left(
b\right) }$ and $Bn_{\tau \left( a\right) }n_{\tau \left( b\right)
}B\subseteq d_{\tau \left( a\right) }d_{\tau \left( b\right) }$. Now, $\tau $
is a homomorphism, so $\tau \left( ab\right) =\tau \left( a\right) \tau
\left( b\right) $, and hence $d_{\tau \left( ab\right) }=Bn_{\tau \left(
ab\right) }B=Bn_{\tau \left( a\right) \tau \left( b\right) }B$. Now $\left(
n_{\tau \left( a\right) }H\right) \left( n_{\tau \left( b\right) }H\right)
=n_{\tau \left( a\right) \tau \left( b\right) }H$, and since $H\subseteq B$
this implies $Bn_{\tau \left( a\right) \tau \left( b\right) }B=Bn_{\tau
\left( a\right) }n_{\tau \left( b\right) }B$. Thus $d_{\tau \left( ab\right)
}=Bn_{\tau \left( a\right) }n_{\tau \left( b\right) }B$ and the claim
follows.
\end{proof}

Note that one can also prove Lemma \ref{Lem_d_t(w0)ContainedInd_t(b)d_t(w0)}%
, using Lemma \ref{Lem_d_t(xy)=d_t(x)d_t(y)} and Lemma \ref{Lem_dwds} (\ref%
{Eq_dsdw}), without explicitly invoking the definition of the $d_{w}$.

\begin{proof}[Proof of Theorem \protect\ref{Th_G=BB_B for BN-pair}]
Let $w\in W$ be arbitrary. We will show that $d_{w}\subseteq d_{w_{0}}^{2}$.
If $w=1_{W}$ we get $d_{w}=B$ and $d_{w_{0}}^{2}=BB^{n_{0}}B$ since $%
w_{0}^{2}=1$. Since $B\subseteq BB^{n_{0}}B$ our claim holds for this case
and hence we can assume that $l\left( w\right) >0$.

Fix $a\in Red\left( ww_{0}\right) $. Since $w\neq1$, we have $ww_{0}\neq
w_{0}$ and hence $l\left( ww_{0}\right) <l\left( w_{0}\right) $. Hence there
exists $b\in\mathcal{E}_{I}$ such that $b$ is reduced, $l\left( b\right) >0$
and $ab\in Red\left( w_{0}\right) $ (see the first paragraph of the proof of
Proposition 2.5.5 of \cite{Carter2} and note that it relies on properties of 
$W$ that apply for the general case of a group with a $BN$-pair and a finite
Weyl group $W$). We have $\tau\left( a\right) =ww_{0}$ and hence $\tau\left(
ab\right) =\tau\left( a\right) \tau\left( b\right) =ww_{0}\tau\left(
b\right) $. On the other hand $ab\in Red\left( w_{0}\right) $ implies that $%
\tau\left( ab\right) =w_{0}$, so, substituting this in the previous
relation, we obtain $w_{0}=ww_{0}\tau\left( b\right) $ and hence $\tau\left(
b\right) =w_{0}w^{-1}w_{0}$. Furthermore, $%
d_{w_{0}}^{2}=d_{w_{0}}d_{w_{0}}=d_{\tau\left( ab\right) }d_{w_{0}}$. By
Lemma \ref{Lem_d_t(xy)=d_t(x)d_t(y)}, $d_{\tau\left( ab\right)
}=d_{\tau\left( a\right) }d_{\tau\left( b\right) }$ and hence we proved $%
d_{w_{0}}^{2}=d_{\tau\left( a\right) }d_{\tau\left( b\right) }d_{w_{0}}$.

By Lemma \ref{Lem_d_t(w0)ContainedInd_t(b)d_t(w0)} we have $%
d_{w_{0}}\subseteq d_{\tau\left( b\right) }d_{w_{0}}$. Hence $d_{\tau\left(
a\right) }d_{w_{0}}\subseteq d_{\tau\left( a\right) }d_{\tau\left( b\right)
}d_{w_{0}}=d_{w_{0}}^{2}$. Now we get from Lemma \ref%
{Lem_d_t(ab)ContainedInd_t(a)d_t(b)} (taking, in that lemma, $b\in\mathcal{E}%
_{k}$ such that $\tau\left( b\right) =w_{0}$) $d_{\tau \left( a\right)
w_{0}}\subseteq d_{\tau\left( a\right) }d_{w_{0}}$. However, $a\in Red\left(
ww_{0}\right) $, so $\tau\left( a\right) =ww_{0}$, giving $w=\tau\left(
a\right) w_{0}$, and so $d_{w}\subseteq d_{\tau\left( a\right) }d_{w_{0}}$.
Combining this with $d_{\tau\left( a\right) }d_{w_{0}}\subseteq
d_{w_{0}}^{2} $ we obtain $d_{w}\subseteq d_{w_{0}}^{2}$ as desired.
\end{proof}

\begin{proof}[Proof of Remark \protect\ref{Rem_InfiniteWeyl}]
An arbitrary product of double cosets of $B$ in $G$ takes the form, in the
associated DC dioid, $d_{w_{1}}\cdots d_{w_{m}}$ for some $w_{1},\ldots
,w_{m}\in W$. For each $1\leq j\leq m$, choose a reduced expression $%
w_{j}=s_{j_{1}}\cdots s_{j_{l\left( w_{j}\right) }}$. Applying Lemma \ref%
{Lem_d_t(xy)=d_t(x)d_t(y)}, we get $d_{w_{j}}=d_{s_{j_{1}}}\cdots
d_{s_{j_{l\left( w_{j}\right) }}}$. Thus, any product of double cosets of $B$
in $G$, is a finite length product of some $d_{s_{i}}$ for some $i\in I$
(with possible repetitions). Now, employing Lemma \ref{Lem_dwds}, it follows
by induction on the number of the $d_{s_{i}}$ factors in the product, that
such a product is a union of a finite number of various $d_{w}$ with $w\in W$%
. Since $G$ is equal to the union of the infinite set $\left\{ d_{w}|w\in
W\right\} $, no product of double cosets of $B$ in $G$ is equal to $G$.
\end{proof}

\begin{proof}[Proof of Corollary \protect\ref{Coro_Gauss}]
The claim follows immediately from Theorem \ref{Th_G=BB_B for BN-pair} upon
substituting $B=UH$ in $G=BB^{n_{0}}B$ and noting that $H$ commutes with
both $U$ and $n_{0}$.
\end{proof}

\section{The topological approach\label{SECT_TOPO}}

This section illustrates the topological approach to products of double
cosets.

\begin{proof}[Proof of Proposition \protect\ref{Prop_semiTopo}]
Let $g\in G$. We have to prove that $g\in AA^{-1}$. Consider $gA$. This is
an open subset of $G$, and hence, since $A$ is dense in $G$, it intersects $%
A $, that is $gA\cap A\neq\emptyset$. It follows that there exist $%
a,a^{\prime}\in A$ such that $ga^{\prime}=a$. This is equivalent to $%
g=a\left( a^{\prime}\right) ^{-1}\in AA^{-1}$.
\end{proof}

\begin{proof}[Proof of Corollary \protect\ref{Coro_GReducedG=BBBFromTopo}]
Observe that a linear algebraic group is a semi-topological group and hence
we can apply Proposition \ref{Prop_semiTopo} with $A=Bn_{0}B$, provided that
we show that $\left( Bn_{0}B\right) ^{-1}=Bn_{0}B$ and that $Bn_{0}B$ is an
open dense subset of $G$. The first claim is an immediate consequence of $%
w_{0}^{2}=1$ and the second is well known (see \cite{MalleTesterman}, proof
of Theorem 11.20).
\end{proof}

\section{$\protect\gamma_{\text{cp}}\left( G\right) =3$ for a non-solvable
finite $G$\label{Sect_gamma=3}}

In this section we prove Theorem \ref{Th_gamma=3}. For any group $G$ we
define $\beta _{\text{cp}}\left( G\right) $ as the smallest natural number $%
k $ such that there exist $A<G$ and $x\in G$ for which $G=\left( AxA\right)
^{k}$ ($\Longleftrightarrow G=AA^{x}\cdots A^{x^{k}}$), and $\beta _{\text{cp%
}}\left( G\right) =\infty $ if no such $k$ exists. Then $\beta _{\text{cp}%
}\left( G\right) $ satisfies the lifting property, namely, for every $%
N\trianglelefteq G$ we have $\beta _{\text{cp}}\left( G\right) \leq \beta _{%
\text{cp}}\left( G/N\right) $ (see Proposition 8 of \cite{GaronziLevy} and
note that $\left( xN\right) ^{i}=x^{i}N$). Assume, henceforth, that $G$ is a
finite non-solvable group. Following the proof of Theorem 3 of \cite%
{GaronziLevy}, using the same arguments, we reduce to the case where $G$ has
a unique minimal normal subgroup $N$, and $N$ is non-abelian. Thus we can
assume the minimal non-solvable setting (see the beginning of Section 3 of 
\cite{GaronziLevy}). Now one checks that the analysis carried in Section 3
of \cite{GaronziLevy} goes through also for $\beta _{\text{cp}}\left(
G\right) $ since the restriction that the conjugating elements are
successive powers of a given $x$ lifts from subgroups to direct products of
subgroups and to normalizers of subgroups. Thus, relying on an analogue of
Lemma 14 of \cite{GaronziLevy} for $\beta _{\text{cp}}\left( G\right) $, we
have the following reduction of the statement $\beta _{\text{cp}}\left(
G\right) =2$ for every finite non-solvable group $G$ to almost simple groups.

\begin{claim}
\label{Claim_BetaReduction}$\beta_{\text{cp}}\left( G\right) =2$ for every
finite non-solvable group $G$ if for any almost simple group $X$ with $%
S=soc\left( X\right) $, there exist $A<S$ and $x\in S$ such that:

(a) $S=\left( AxA\right) ^{2}$ $\ $and (b) $N_{X}\left( A\right) S=X$.
\end{claim}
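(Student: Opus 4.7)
The plan is to combine the lifting property with an analog of \cite[Lemma 14]{GaronziLevy} adapted to $\beta_{\text{cp}}$. Using $\beta_{\text{cp}}(G) \leq \beta_{\text{cp}}(G/N)$ for every $N \trianglelefteq G$, together with the reduction already invoked in the paragraph preceding the claim, one may assume that $G$ has a unique minimal normal subgroup $N = T_1 \times \cdots \times T_k$, where $T \cong T_i$ is non-abelian simple. The quotient $X := N_G(T_1)/C_G(T_1)$ is then almost simple with socle $S$ isomorphic to $T$.

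The hypothesis, applied to this $X$, provides $A_0 < S$ and $x_0 \in S$ satisfying $S = (A_0 x_0 A_0)^2$ and $N_X(A_0) S = X$. Pull these back through the quotient map $N_G(T_1) \to X$ to obtain a subgroup $A_1 \leq N_G(T_1)$ and an element $x_1 \in T_1$ lifting $A_0$ and $x_0$. Using the transitive conjugation action of $G$ on $\{T_1, \ldots, T_k\}$, fix $g_1 = 1, g_2, \ldots, g_k \in G$ with $T_i = T_1^{g_i}$, and set $A_i := A_1^{g_i}$, $x_i := x_1^{g_i} \in T_i$. Define
\[
A^* := A_1 A_2 \cdots A_k, \qquad x^* := x_1 x_2 \cdots x_k \in N.
\]
Since the $A_i \cap N$ lie in pairwise distinct simple factors, $A^*$ is a proper subgroup of $G$, conjugation by $x^*$ restricts on each factor to conjugation by $x_i$, and $(A^* \cap N)\, x^* (A^* \cap N) = \prod_{i=1}^k (A_i \cap T_i)\, x_i\, (A_i \cap T_i)$. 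Squaring and applying condition (a) in each factor gives $(A^* x^* A^*)^2 \cap N = N$. Condition (b) applied to each factor then yields $A^* N = G$, upgrading the equality from $N$ to $G$ and producing $(A^* x^* A^*)^2 = G$, so $\beta_{\text{cp}}(G) \leq 2$; since $\beta_{\text{cp}}(G) \geq 2$ always holds, we conclude $\beta_{\text{cp}}(G) = 2$.

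The main obstacle is the direct-product-with-normalizers lift itself. For $\gamma_{\text{cp}}$ one has the freedom to pick three independent conjugating elements, whereas for $\beta_{\text{cp}}$ the three conjugates $A^*$, $A^{*x^*}$, $A^{*(x^*)^2}$ are coupled through the single element $x^*$, and $(x^*)^2$ is forced upon us. The choice $x^* \in N$ with components in distinct simple factors is what makes the scheme work: conjugation by $x^*$ acts diagonally, $(x^*)^j$ has $x_i^j$ as its $i$-th component, and the direct-product structure is preserved under successive powers. Condition (b) is precisely what guarantees that $A^*$, together with $N$, covers the outer conjugation action of $G$ on $N$, so that a factorization of $N$ extends to one of $G$. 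Verifying this carefully, as well as ensuring that the transversal $\{g_i\}$ may be chosen compatibly with the $N_X(A_0)$-cover in (b), is the technical heart of the adaptation of \cite[Lemma 14]{GaronziLevy} from $\gamma_{\text{cp}}$ to $\beta_{\text{cp}}$.
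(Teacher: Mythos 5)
There is a genuine gap, and it sits exactly where you flag the ``technical heart'': your candidate subgroup $A^{*}=A_{1}A_{2}\cdots A_{k}$ is contained in $N$ (each $A_{i}$ lies in the simple factor $T_{i}$), and $x^{*}\in N$ as well, so $\left( A^{*}x^{*}A^{*}\right) ^{2}\subseteq N\subsetneq G$ and $A^{*}N=N\neq G$; the step ``Condition (b) applied to each factor then yields $A^{*}N=G$'' cannot be repaired with this $A^{*}$. To escape $N$ you must replace $A^{*}$ by a normalizer-type subgroup (this is what Lemma 14 of \cite{GaronziLevy} actually uses), and that is precisely the point the paper's proof addresses and your sketch omits: the paper sets $U:=N_{X}\left( A\right) $, so that condition (b) literally becomes the supplement condition $US=X$ required by the ($\beta _{\text{cp}}$-analogue of) Lemma 14, and then upgrades condition (a) from $A$ to $U\cap S=N_{S}\left( A\right) $ via the chain $S=\left( AxA\right) ^{2}=AA^{x}A^{x^{2}}\subseteq N_{S}\left( A\right) \left( N_{S}\left( A\right) \right) ^{x}\left( N_{S}\left( A\right) \right) ^{x^{2}}=\left( N_{S}\left( A\right) xN_{S}\left( A\right) \right) ^{2}$, using $\left( N_{S}\left( A\right) \right) ^{s}=N_{S}\left( A^{s}\right) $ and Theorem \ref{Th_ConjDCEquiv}(2)(ii), and finally checks $1<N_{S}\left( A\right) <S$ from the simplicity of $S$. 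None of this appears in your proposal.

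A secondary issue: you re-derive the direct-product lift from scratch rather than invoking the already-established $\beta _{\text{cp}}$-analogue of Lemma 14 (which the paragraph preceding the claim justifies, noting that the constraint that the conjugating elements be successive powers of a single $x$ survives direct products and normalizers). Redoing that lift is legitimate in principle, but then you must actually carry out the normalizer construction and verify that a single element $x^{*}$ with $\left( x^{*}\right) ^{2}$ forced works at the level of $G$, not just of $N$ --- exactly the part you defer. As written, the argument does not close.
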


\begin{proof}
By the discussion preceding the claim we can assume that $G$ satisfies the
minimal non-solvable setting of Section 3 of \cite{GaronziLevy}. Hence it is
sufficient to show that the existence of $A<S$ with the properties specified
in the claim, ensures the existence of $U\leq X$ satisfying all of the
assumptions of (the $\beta _{\text{cp}}\left( G\right) $ analogue of) Lemma
14 of \cite{GaronziLevy}. Indeed, we set $U:=N_{X}\left( A\right) $. Then $%
US=X$ is just (b). In order to prove that $S=\left( \left( U\cap S\right)
x\left( U\cap S\right) \right) ^{2}$ we note that $U\cap S=N_{S}\left(
A\right) \geq A$. Using this, the fact that for any $s\in S$ we have $\left(
N_{S}\left( A\right) \right) ^{s}=N_{S}\left( A^{s}\right) $, and Theorem %
\ref{Th_ConjDCEquiv}(2)(ii) we get that $S=\left( AxA\right)
^{2}=AA^{x}A^{x^{2}}$ implies $S=N_{S}\left( A\right) \left( N_{S}\left(
A\right) \right) ^{x}\left( N_{S}\left( A\right) \right) ^{x^{2}}=\left(
\left( N_{S}\left( A\right) \right) x\left( N_{S}\left( A\right) \right)
\right) ^{2}=\left( \left( U\cap S\right) x\left( U\cap S\right) \right)
^{2} $. Finally, $N_{S}\left( A\right) \geq A>1$ and $N_{S}\left( A\right)
<S $ since $S$ is simple and $A$ is proper in $S$ and non-trivial.
\end{proof}

Thus it remains to use the classification of simple non-abelian groups in
order to find an appropriate choice of $A$ for any almost simple group $X$.
As a matter of fact, in all cases we find $A$ $<S$, for each simple
non-abelian group $S$, such that this $A$ satisfies (a) and (b) for all
almost simple $X$ with $S=soc\left( X\right) $.

1. $S\cong Alt\left( n\right) $, the alternating group of degree $n$, and $%
n\geq5$. Here we use Corollary 15 of \cite{GaronziLevy}. Note that in all
cases there, $A<S$ is a point stabilizer of a $2$-transitive action of $S$
on some set. In this case $S=A\cup AxA$ for any $x\in S-A$. Furthermore, one
can choose $x$ to be an involution so clearly $A\subseteq\left( AxA\right)
^{2}$. On the other hand $\left( AxA\right) ^{2}\neq A$ since $%
A^{x}\subseteq\left( AxA\right) ^{2}$ and $A\neq A^{x}$ since $A$ is self
normalizing. Thus $S=\left( AxA\right) ^{2}$ so condition (a) of Claim \ref%
{Claim_BetaReduction} holds, while condition (b) is checked in the proof of
Corollary 15 of \cite{GaronziLevy}.

2. $S$ is a simple group of Lie type. Here we use the fact that all finite
simple groups of Lie type are groups with a $BN$-pair (See \cite{GLS}
Definition 2.2.8, Theorem 2.2.10 and Theorem 2.3.4. The Tits group is not
counted in this category). Hence, by Theorem \ref{Th_G=BB_B for BN-pair}, $%
S=\left( Bn_{0}B\right) ^{2}$ where $B$ is a Borel subgroup of $S$ and $%
n_{0}\in S$ is any preimage of the longest element of the Weyl group. Thus
we take $A=B$ in Claim \ref{Claim_BetaReduction}, and this choice satisfies
condition (a). It is clear that $B<S$ (see axiom (iv) for groups with a $BN$%
-pair). It remains to check that condition (b) of Claim \ref%
{Claim_BetaReduction} is satisfied, that is, that $N_{X}\left( B\right) S=X$%
. Let $p$ be the defining characteristic of $S$. Then, by \cite{GLS},
Theorem 2.6.5 (d) (taking there $J=\emptyset$) we have $B=N_{S}\left(
P\right) $ where $P$ is a Sylow $p$-subgroup of $S$. Let $x\in X$. Then%
\begin{equation*}
B^{x}=N_{S}\left( P\right) ^{x}=N_{S}\left( P^{x}\right) \text{.}
\end{equation*}
But $P^{x}$ is a Sylow $p$-subgroup of $S$ so by Sylow's theorem there exist 
$s\in S$ such that $P^{x}=P^{s}$ for some $s\in S$. Hence $B^{x}=N_{S}\left(
P^{s}\right) =N_{S}\left( P\right) ^{s}=B^{s}$. Thus we have proved that for
each $x\in X$ there exists $s\in S$ such that $B^{x}=B^{s}$. The last
equality is equivalent to $B^{xs^{-1}}=B$ which is equivalent to $xs^{-1}\in
N_{X}\left( B\right) $. From here we get $x\in N_{X}\left( B\right)
s\subseteq N_{X}\left( B\right) S$ and $N_{X}\left( B\right) S=X$ follows.

3. $S$ is one of the $26$ sporadic simple groups or $S$ is the Tits group $%
^{2}F_{4}\left( 2\right) ^{\prime }$. First note that for all of the $27$
groups under discussion $\left\vert Aut\left( S\right) :S\right\vert \in
\left\{ 1,2\right\} $. Therefore, for a fixed $S$, we have two
possibilities: (i) $Aut\left( S\right) =S$ and then it suffices to find $A<S$
satisfying (a) of Claim \ref{Claim_BetaReduction} or (ii) $\left\vert
Aut\left( S\right) :S\right\vert =2$ in which case it suffices to find a
maximal $A<S$ satisfying (a) of Claim \ref{Claim_BetaReduction} and $%
N_{Aut\left( S\right) }\left( A\right) \neq A$. Note that $N_{Aut\left(
S\right) }\left( A\right) \neq A$ for $A$ maximal in $S$ ensures that $%
N_{Aut\left( S\right) }\left( A\right) $ contains an element of $Aut\left(
S\right) -S$ and hence $N_{Aut\left( S\right) }\left( A\right) S=Aut\left(
S\right) $. Information about maximal subgroups $A$ of the simple groups $S$
in question, satisfying $N_{Aut\left( S\right) }\left( A\right) \neq A$ is
readily available in the ATLAS \cite{ATLAS}. The verification of condition
(a) for the candidate subgroup was done using GAP \cite{GAP} and MAGMA \cite%
{MAGMA}. For twenty six out of the twenty seven simple groups under
discussion, we have used the GAP package \texttt{mfer} (\cite%
{GAPCTblLib1.2.1}, \cite{BreuerMuller}). This package, written by T. Breuer,
I. H\"{o}hler and J. M\"{u}ller, contains a database which enables one to
compute the collapsed adjacency matrices associated with various
multiplicity-free permutation modules arising from actions of the simple
sporadic groups. Given these matrices we can use the method explained in the
second part of Section \ref{Sect_DC} in order to look for an appropriate
double coset of $A$ where $A$ is a point stabilizer of the action. More
precisely, let $A<S$ be a subgroup such that the permutation character $%
1_{A}^{S}$ is multiplicity-free, and let $P_{1},\ldots ,P_{r}$ be the $r$
collapsed adjacency matrices computed by \texttt{mfer}. We had run a simple
GAP program which loops over $1\leq i\leq r$, and for each $i$ checks
whether all of the entries of the $i$-th row of $P_{i}$ are non-zero. As
explained in Section \ref{Sect_DC}, the double coset labeled by $i$ squares
to $G$ if and only if $P_{i}$ satisfies this condition. Note that the 
\texttt{mfer} package also allows the computation of collapsed adjacency
matrices associated with multiplicity-free permutation modules of groups
which appear in GAP's table of marks library (TOM, see \cite%
{NaughtonPfeiffer}). We have used this feature for the Tits group $%
{}^{2}F_{4}\left( 2\right) ^{\prime }$.

For $S=O^{\prime }N$ we have $\left\vert Aut\left( S\right) :S\right\vert =2$%
, and the \texttt{mfer} database does not contain a subgroup $A$ satisfying $%
N_{Aut\left( S\right) }\left( A\right) \neq A$. Thus, for $S=O^{\prime }N$,
we cannot use the collapsed adjacency matrices method. Instead, we have
verified condition (a) of Claim \ref{Claim_BetaReduction} for $A=J_{1}$
using a different algorithm, which we implemented in MAGMA. Note that $%
N_{Aut\left( S\right) }\left( A\right) \neq A$ (see \cite{ATLAS}). This
algorithm relies on the possibility to construct representatives $%
x_{1},\ldots ,x_{r}$ of the $r$ distinct $A$ double cosets (a built-in MAGMA
function) and on the ability to test, for each $1\leq i\leq r$, if a given $%
s\in S$ belongs to the $Ax_{i}A$ (without computing the full set $Ax_{i}A$).
This function is not provided by MAGMA and had to be written separately. The
main steps of the algorithm are:

1. Calculate a set $\left\{ x_{1}=1,x_{2},\ldots ,x_{r}\right\} $ of
distinct $A$-double cosets representatives.

2. For each $2\leq i\leq r$ check if $x_{i}^{-1}\in Ax_{i}A$ - this
guarantees $A\subseteq\left( Ax_{i}A\right) ^{2}$ which is necessary for $%
S=\left( Ax_{i}A\right) ^{2}$.

3. For each double coset $AxA$, $x\in \left\{ x_{2},\ldots ,x_{r}\right\} $,
satisfying the necessary condition in 2, check whether $\left( AxA\right)
^{2}=S$ by checking $\left( AxA\right) ^{2}\cap \left( Ax_{j}A\right) \neq
\emptyset $ for all $2\leq j\leq r$ using the following probabilistic
function (TRIALS is a predefined positive integer constant): Choose $a\in A$
at random TRIALS times. For each $a$ find the unique $2\leq j\leq r$ such
that $xax\in Ax_{j}A$ and mark it (clearly $\left( AxA\right) ^{2}\cap
\left( Ax_{j}A\right) \neq \emptyset $ if and only if $xax\in Ax_{j}A$ for
some $a\in A$). If all $j\in \left\{ 2,\ldots ,r\right\} $ are marked after
TRIALS trials, then $\left( AxA\right) ^{2}=S$ with certainty. Otherwise, we
can only estimate the probability that $\left( AxA\right) ^{2}\neq S$, but
this is not needed for our purpose. One should choose TRIALS big enough
compared to $r$, taking into consideration the relative sizes of the
non-trivial double cosets.

Table \ref{TblSporadics} in the appendix summarizes the results of the
computations described above by listing pairs $\left( S,A\right) $ such that 
$S$ varies over all of the $26$ sporadic simple groups and the Tits group.
For each $S$ it displays one choice of $A<S$ that satisfies Claim \ref%
{Claim_BetaReduction}.

\begin{proof}[Proof of Corollary \protect\ref{Coro_AtMost3}]
If $G$ is non-solvable then Theorem \ref{Th_ConjDCEquiv} and Theorem \ref%
{Th_gamma=3} imply the existence of $A<G$ and $x\in G$ such that $G=AA^{x}A$%
. Suppose now that $G$ is a finite solvable group which is not a cyclic $p$%
-group. Then $G=NH=NHN$ where $N$ is a maximal normal subgroup and $H$ is
any proper subgroup which is not contained in $N$. For if $H<G$, $H\not\leq
N $ then $HN/N\leq G/N$ is non-trivial. But $G/N$ is cyclic of order $p$ by
the solvability of $G$ so $HN/N=G/N$ and $G=HN$. If there is no $H<G$, $%
H\not\leq N$, then any $g\in G-N$ generates $G$ and so $G$ is cyclic. Since $%
G/N$ is of order $p$, we can choose $g$ to be a $p$-element and hence $G$ is
a cyclic $p$-group - a contradiction.
\end{proof}

\section{Finite Coxeter Groups and products of conjugate parabolics\label%
{SECT_Coxeter}}

Recall that the Weyl group of a finite simple group of Lie type is in
particular a finite Coxeter group. A finite Coxeter group $C$ is a finite
group with presentation $\left\langle s_{1},\ldots ,s_{n}|\left(
s_{i}s_{j}\right) ^{m_{ij}}=1,\forall 1\leq i,j\leq n\right\rangle $ where
the positive integers $m_{ij}$ satisfy: (i) $m_{ij}=m_{ji}$, $\forall 1\leq
i,j\leq n$ (ii) $m_{ii}=1$ for all $1\leq i\leq n$ (equivalently, each $%
s_{i} $ is an involution) (iii) For any $1\leq i\neq j\leq n$, $m_{ij}\geq 2$
(note that $m_{ij}=2$ if and only if $s_{i}$ and $s_{j}$ commute). The
values of the $m_{ij}$ can be encoded in a finite, undirected, simple,
edge-labeled graph (the Coxeter graph) on $n$ vertices. The set of vertices
is $\left\{ s_{1},\ldots ,s_{n}\right\} $. For any $1\leq i\neq j\leq n$, $%
s_{i}$ and $s_{j}$ are connected by an edge if and only if $m_{ij}\geq 3$.
If $m_{ij}=3$ the edge is left unlabeled, and otherwise it is labeled by $%
m_{ij}$ ($\geq 4$). If the Coxeter graph is connected then the corresponding
Coxeter group is irreducible (i.e., it is not the direct product of two
Coxeter groups). The irreducible finite Coxeter groups were classified by
Coxeter (see \cite{Humphreys}, Section 2.4, Fig. 2.1 which lists all
connected Coxeter graphs). If $C$ is a finite Coxeter group with a Coxeter
graph $\Gamma $ then the Coxeter graph of a standard parabolic subgroup of $%
C $ is obtained from $\Gamma $ by deleting the vertices (and the edges
adjacent to them) corresponding to the $s_{i}$ that do not belong to the
subgroup. The maximal standard parabolic subgroups of $C$ are obtained by
deleting just a single $s_{i}$. It is clear that for the purpose of proving
Theorem \ref{Th_WeylParabolics}, it is sufficient to consider only the
maximal standard parabolic subgroups of $C$.

\begin{proof}[Proof of Theorem \protect\ref{Th_WeylParabolics}]
1. $C$ is of type $A_{n}$, $n\geq 2$. Then $C\cong S_{n+1}$, is a product of
three conjugates of a parabolic subgroup which is isomorphic to $S_{n}$ -
see Section \ref{Sect_gamma=3}, the discussion following the proof of Claim %
\ref{Claim_BetaReduction}. Note that the $2$-transitivity argument applies
for $S_{n+1}$ with $n\geq 2$.

2. $C$ is of type $B_{n}$, $n\geq 3$. In this case $C=V\rtimes S_{n}$ where%
\begin{equation*}
V=\left\{ \left( v_{1},\ldots ,v_{n}\right) |v_{i}\in \left\{ 0,1\right\} 
\text{, }1\leq i\leq n\right\} \cong Z_{2}^{n}
\end{equation*}%
is an elementary abelian $2$-group of rank $n$. We realize $S_{n}$ as $%
S_{\Omega }$ acting naturally (on the right) on the set $\Omega :=\left\{
1,\ldots ,n\right\} $. Then, for any $\pi \in S_{\Omega }$ and any $v=\left(
v_{1},\ldots ,v_{n}\right) \in V$ we have $\pi ^{-1}v\pi =\left( v_{\left(
1\right) \pi },\ldots ,v_{\left( n\right) \pi }\right) $. For each $1\leq
i\leq n$ set $C_{i}:=V_{i}\rtimes S_{\Omega _{i}}$, where $\Omega
_{i}:=\Omega -\left\{ i\right\} $ and $V_{i}$ is the subgroup of $V$
consisting of all $n$-tuples $\left( v_{1},\ldots ,v_{n}\right) $ satisfying 
$v_{i}=0$. Note that $C_{i}$ is a maximal parabolic subgroup of $C$.
Moreover, it is straightforward to check that if $g\in S_{\Omega }$
satisfies $\left( 1\right) g^{-1}=i$ then $C_{1}^{g}=C_{i}$. Fix $i\in
\left\{ 2,\ldots ,n\right\} $. We prove that $C=C_{1}C_{i}C_{1}$. We have $%
C_{1}C_{i}C_{1}=V_{1}S_{\Omega _{1}}V_{i}S_{\Omega _{i}}V_{1}S_{\Omega _{1}}$%
. Let $\pi \in S_{\Omega _{1}}$ be arbitrary. Then $V_{1}\pi
V_{i}=V_{1}V_{i}^{\pi ^{-1}}\pi $. Since $1$ is fixed by $\pi $, we have $%
V_{1}\neq V_{i}^{\pi ^{-1}}$. Hence $V_{1}$,$V_{i}^{\pi ^{-1}}$ are two
distinct subgroups of $V$ (recall that $V\trianglelefteq C$) of index $2$ in 
$V$, hence $\left\vert V_{1}V_{i}^{\pi ^{-1}}\right\vert \geq \frac{%
\left\vert V\right\vert ^{2}}{4\frac{\left\vert V\right\vert }{4}}%
=\left\vert V\right\vert $. Therefore $V_{1}V_{i}^{\pi ^{-1}}=V$. Since this
holds for any $\pi \in S_{\Omega _{1}}$ we can conclude that $V_{1}S_{\Omega
_{1}}V_{i}S_{\Omega _{i}}V_{1}S_{\Omega _{1}}=VS_{\Omega _{1}}S_{\Omega
_{i}}S_{\Omega _{1}}$. Finally, $S_{\Omega _{1}}S_{\Omega _{i}}S_{\Omega
_{1}}=S_{\Omega }$, by the same argument used for the case $C\cong S_{n+1}$
above.

3. $C$ is of type $D_{n}$, $n\geq 4$. In this case $C=U\rtimes S_{n}$ where $%
U$ is the subgroup of $V$ consisting of all $n$-tuples $\left( v_{1},\ldots
,v_{n}\right) $ satisfying $\tsum\limits_{i=1}^{n}v_{i}\equiv 0\left( \func{%
mod}2\right) $. All the arguments of the previous section carry through if
we replace $V_{i}$ by $U_{i}:=U\cap V_{i}$. Note that the parabolic subgroup 
$C_{1}$ obtained in this way is isomorphic to the Coxeter group of type $%
D_{n-1}$ (if $n=4$ set $D_{3}:=A_{3}$).

4. $C$ is of type $I_{2}\left( m\right) $, that is, $C$ is a dihedral group $%
Dih_{2m}$, $m\geq 3$ an integer (see \cite{Humphreys}, Section 2.4, Fig.
2.1). Note that $I_{2}\left( 3\right) =A_{2}$ is dealt with in part 1. For $%
C $ of type $I_{2}\left( 4\right) =B_{2}$ we get $\left\vert C\right\vert =8$
so $C$ is nilpotent and $\gamma _{\text{cp}}\left( C\right) =\infty $.
Finally, consider $C\cong Dih_{2m}$, $m\geq 5$. A maximal parabolic subgroup
is generated by a single involution and hence it is of order $2$. Since $%
\left\vert C\right\vert \geq 10$, we get that $C$ is not a product of three
conjugates of a maximal parabolic subgroup.

5. We have checked the remaining Coxeter groups by implementing criterion
2(i) of Theorem \ref{Th_ConjDCEquiv} in GAP. The functions provided by GAP
allow one to compute faithful permutation representations for each $C$ in
question, with explicit representations of the $s_{i}$. Thus it is possible
to compute the maximal parabolic subgroups, and, for each maximal parabolic
subgroup to compute its coset space and the orbits of its right
multiplication action on its own right cosets.
\end{proof}

\begin{proof}[Proof of Remark \protect\ref{Rem_FromWToG}]
By part 1 of Theorem \ref{Th_ConjDCEquiv}, $W=\left(
W_{I^{\prime}}zW_{I^{\prime}}\right) \left(
W_{I^{\prime}}wW_{I^{\prime}}\right) $ for some $z,w\in W$, where elements
of $W$ stand for left cosets of $H$. Now, $G=BWB$ (\cite{Carter2},
Proposition 2.1.1). Hence, substituting the expression for $W$ gives $%
G=B\left( W_{I^{\prime}}n_{z}W_{I^{\prime}}\right) \left(
W_{I^{\prime}}n_{w}W_{I^{\prime}}\right) B$, where $n_{z}$ and $n_{w}$ are
some fixed preimages in $N$ of respectively $z$ and $w$. In particular we
have 
\begin{equation*}
G\subseteq\left( \left( BW_{I^{\prime}}B\right) n_{z}\left( BW_{I^{\prime
}}B\right) \right) \left( \left( BW_{I^{\prime}}B\right) n_{w}\left(
BW_{I^{\prime}}B\right) \right) \text{.}
\end{equation*}
Applying part 1 of Theorem \ref{Th_ConjDCEquiv} again, it follows that $G$
is the product of three parabolic subgroups conjugate to $%
P_{I^{\prime}}=BW_{I^{\prime}}B$.
\end{proof}

\section*{{\protect\huge Appendix} \label{appendix}}

Table \ref{TblSporadics} lists, for each sporadic simple group $S$ including
the Tits group ${}^{2}F_{4}\left( 2\right) ^{\prime }$, one choice of a
subgroup $A<S$ satisfying the conditions of Claim \ref{Claim_BetaReduction}.
Note that the subgroups are specified by their isomorphism type, and there
may be more than one conjugacy class of subgroups belonging to the same
isomorphism type.

\begin{center}
\bigskip 
\begin{table}[h] \centering%
\begin{tabular}{|c|c||c|c|}
\hline
$S$ & $A$ & $S$ & $A$ \\ \hline
$M_{11}$ & $A_{6}.2_{3}$ & $O^{\prime }N$ & $J_{1}$ \\ \hline
$M_{12}$ & $M_{11}$ & $Co_{3}$ & $M^{c}L.2$ \\ \hline
$J_{1}$ & $L_{2}(11)$ & $Co_{2}$ & $U_{6}(2).2$ \\ \hline
$M_{22}$ & $L_{3}(4)$ & $Fi_{22}$ & $2.U_{6}(2)$ \\ \hline
$J_{2}$ & $U_{3}(3)$ & $HN$ & $A_{12}$ \\ \hline
$M_{23}$ & $M_{22}$ & $Ly$ & $G_{2}\left( 5\right) $ \\ \hline
${}^{2}F_{4}\left( 2\right) ^{\prime }$ & $L_{2}(25)$ & $Th$ & $%
^{3}D_{4}(2).3$ \\ \hline
$HS$ & $M_{22}$ & $Fi_{23}$ & $2.Fi_{22}$ \\ \hline
$J_{3}$ & $L_{2}(16).2$ & $Co_{1}$ & $Co_{2}$ \\ \hline
$M_{24}$ & $M_{23}$ & $J_{4}$ & $2^{11}:M_{24}$ \\ \hline
$M^{c}L$ & $U_{4}\left( 3\right) $ & $Fi_{24}^{\prime }$ & $Fi_{23}$ \\ 
\hline
$He$ & $S_{4}(4).2$ & $B$ & $2.^{2}E_{6}(2).2$ \\ \hline
$Ru$ & ${}^{2}F_{4}\left( 2\right) $ & $M$ & $2.B$ \\ \hline
$Suz$ & $G_{2}(4)$ &  &  \\ \hline
\end{tabular}
\caption
{Subgroups of simple sporadic groups and the Tits group with a double coset which squares to the group}%
\label{TblSporadics}%
\end{table}%
\end{center}

\bigskip

\end{document}